\documentclass{amsart}

%%%%%%%%%%%%%%%%%%%%%%%%%%%%%%%%%%%%%%%%%%%%%%%%%%%%
%%   Packages
%%%%%%%%%%%%%%%%%%%%%%%%%%%%%%%%%%%%%%%%%%%%%%%%%%%%

\usepackage{amssymb,amsmath,url}
\usepackage{graphicx}
\usepackage[all]{xy}

%\usepackage{fullpage}  %% Could use this package for the arXiv version, 
%% to minimize margins, etc.

\usepackage{pinlabel}   %% to add figure labels, manually, into a PDF figure

%\usepackage{showkeys}

%%%%%%%%%%%%%%%%%%%%%%%%%%%%%%%%%%%%%%%%%%%%%%%%%%%%
%%   Meta data
%%%%%%%%%%%%%%%%%%%%%%%%%%%%%%%%%%%%%%%%%%%%%%%%%%%%

\title{The Morse--Bott--Kirwan condition is local}

%% First author 

\author{Tara Holm}
\address{Dept.\ of Mathematics, Cornell University, Ithaca, NY 14853 USA}
\email{tsh@math.cornell.edu}

%% Second author

\author{Yael Karshon}
\address{Dept.\ of Mathematics, University of Toronto,
40 St.\ George Street, Toronto Ontario M5S 2E4, Canada}
\email{karshon@math.toronto.edu}

%% Keywords, etc

\date{\today}

\keywords{Hamiltonian group action, momentum map, Morse theory, Morse--Bott,
Kirwan surjectivity.}
\subjclass[2010]{Primary 53D20, Secondary 58E05}

%%% Start of user-defined macros %%%

%%%%%%%%%%%%%%%%%%%%%%%%%%%%%%%%%%%%%%%%%%%%%%%%%%%%%%%%%%%%%%%%%%%%%
%   Debug Mode / Clean Mode                                         %
%%%%%%%%%%%%%%%%%%%%%%%%%%%%%%%%%%%%%%%%%%%%%%%%%%%%%%%%%%%%%%%%%%%%%
%                                                                   %
\newif\ifdebug                                                      %
%                                                                   %
%\debugtrue
\debugfalse
%                                                                   %
%  For "clean mode", also comment out the "showkeys"                %
%                                                                   %
%                                                                   %
%%%%%%%%%%%%%%%%%%%%%%%%%%%%%%%%%%%%%%%%%%%%%%%%%%%%%%%%%%%%%%%%%%%%%

\newcommand{\printname}[1]
   {\smash{\makebox[0pt]{\hspace{-1.0in}\raisebox{8pt}{\tiny #1}}}}
\newcommand{\labell}[1] {\ifdebug {\label{#1}\printname{#1}}
                        \else    {\label{#1}} \fi}

\def \acts   {\  \rotatebox[origin=c]{-90}{$\circlearrowright$}\  }

\DeclareMathOperator \supp {supp} 
\DeclareMathOperator \Hess {Hess} 
\DeclareMathOperator \Crit {Crit} 
\DeclareMathOperator \closure {closure}
\DeclareMathOperator \image {image}

\def \R {{\mathbb R}}
\def \F {{\mathbb F}}
\def \Z {{\mathbb Z}}
\def \C {{\mathbb C}}
\def \Q {{\mathbb Q}}
\def \calO {{\mathcal O}}
\def \calW {{\mathcal W}}
\def \tN {{\widetilde{N}}}

\def \inv {^{-1}}
\def \ssminus {\smallsetminus}

\def \del {\partial}

\numberwithin{equation}{section}
%\numberwithin{figure}{section}

%
% enumerating figures with the other stuff: (isn't working!)
%
% \makeatletter
% \let\c@figure\c@equation
% \makeatother

% %       Recall:  \newtheorem {type of theorem} {text to be written}
% \newtheorem {Theorem}[equation]          {Theorem}
% \newtheorem*{Theorem*}                   {Theorem}
% \newtheorem {refTheorem}[equation]      {Theorem}
% \newtheorem {Lemma}[equation]           {Lemma}
% \newtheorem {Claim}[equation]           {Claim}
% \newtheorem {Conjecture}[equation]      {Conjecture}
% \newtheorem {Corollary} [equation]      {Corollary}
% \newtheorem* {Corollary*}                {Corollary}
% \newtheorem {Proposition} [equation]    {Proposition}
% \newtheorem* {Proposition*} {Proposition}
% \newtheorem {Construction} [equation]   {Construction}
% \newtheorem {Fact} [equation]           {Fact}
% \newtheorem* {Lemma*}                    {Lemma}
% %
% \theoremstyle{definition}
% \newtheorem{Definition}[equation]{Definition}
% \newtheorem*{Definition*}{Definition}
% %
% %\theoremstyle{remark}
% \newtheorem{Remark}[equation]{Remark}
% \newtheorem*{Remark*}{Remark}
% \newtheorem{Example}[equation]{Example}
% \newtheorem{Convention}[equation]{Convention}

%       Recall:  \newtheorem {type of theorem} {text to be written}
\newtheorem {Theorem}                   {Theorem} [section]
\newtheorem*{Theorem*}                  {Theorem}

\newtheorem {Lemma}[Theorem]           {Lemma}

\newtheorem* {Corollary*}                {Corollary}
\newtheorem {Proposition} [Theorem]    {Proposition}
\newtheorem* {Proposition*} {Proposition}

\newtheorem* {Lemma*}                    {Lemma}
\theoremstyle{definition}
\newtheorem{Definition}[Theorem]{Definition}
\newtheorem*{Definition*}{Definition}
\newtheorem{Remark}[Theorem]{Remark}
\newtheorem*{Remark*}{Remark}
\newtheorem{Example}[Theorem]{Example}

%
% MAY NOT NEED THIS IN amsart!!
%
%%%%%%%%%%%%%%%%%%%%%%%%%%%%%%%%%%%%%%%%%%%%%%%%
%
% This is relevant for papers in the Alg+Geom topology style:
% To change from e.g. "Proposition 3-4"  to "Proposition 3.4":
%
%\makeatletter
%  \let\c@equation\c@subsection
%  \def\theequation{\thesubsection}
%  \def\tagform@#1{\hbox{$(\mathbf{#1})$}}
%\makeatother
%%%%%%%%%%%%%%%%%%%%%%%%%%%%%%%%%%%%%%%%%%%%%%%%

\def \t {\mathfrak t}
\def \h {\mathfrak h}
\def \g {\mathfrak g}
\DeclareMathOperator \Ad {Ad}

%%%%%%%%%%%%%%%%%%%%%%%%%%%%%%%%%%%%
%
%  Little LaTeX tricks for editing: ``To Do'' and ``Fix''
%     --- from Peter Garfield, 16 Jan 2004
%
%%%%%%%%%%%%%%%%%%%%%%%%%%%%%%%%%%%%

\setlength{\marginparwidth}{0.7in}

%%% End of user-defined macros %%%

\begin{document}

\begin{abstract}
Kirwan identified a condition on a smooth function 
under which the usual techniques of Morse--Bott theory
can be applied to this function.
We prove that if a function satisfies this condition locally 
then it also satisfies the condition globally.
As an application, we use the local normal form theorem
to recover Kirwan's result
that the norm-square of a momentum map satisfies Kirwan's condition.
\end{abstract}

\maketitle

% ---------------------------
\section{Introduction}
\labell{sec:intro}
% ---------------------------

For a Hamiltonian action of a compact Lie group on a compact symplectic
manifold, a fundamental work of Frances Kirwan \cite{kirwan:book}
makes it possible to apply Morse theoretic techniques
to the norm-square of the momentum map.
The norm-square is not a Morse--Bott function;
components of its critical set might not even be smooth submanifolds.
Kirwan identifies a condition on a real valued function,
being \emph{minimally degenerate},
which is more general than the Morse--Bott condition,
and which nevertheless allows one to apply the machinery of Morse theory.
Nowadays, this condition
is sometimes called ``Morse--Bott in the sense of Kirwan.''

Applying Morse--theoretic arguments to the norm-square of a momentum map
is the main ingredient in the proof of Kirwan surjectivity, a pivotal result
in equivariant symplectic geometry and geometric invariant theory, 
as well as in the study
of the global topology of Hamiltonian
compact group actions.  These techniques played a central role 
in the  mathematical confirmation of physicists' predictions
for the structure of the cohomology ring of moduli spaces
of holomorphic vector bundles over Riemann surfaces \cite{JK}.

Kirwan's definition of a minimally degenerate function is not local:
it requires the set of critical points to be a disjoint union 
of closed subsets, each of which has a neighbourhood that satisfies
a certain condition.  If the set of critical points is not discrete,
then, in contrast to the Morse--Bott condition,
{\em a priori} it is not clear if one can tell that a function
is ``minimally degenerate'' by examining small neighbourhoods
of individual critical points.
This aspect of the definition
makes it difficult to check whether a function satisfies this condition.

The main result of this paper is that Kirwan's ``minimally degenerate''
condition actually \emph{is} a local condition:
if a function is minimally degenerate near each critical point,
then it is minimally degenerate (Theorem~\ref{main}).
As a corollary, we obtain a Morse--Lemma--type local characterization
of minimally degenerate functions (Theorem~\ref{local-coords}).

Section \ref{sec:statement} contains basic definitions and the statements of 
our main results, Theorems~\ref{main} and~\ref{local-coords}.
The main steps of the proof are formulated as Propositions~\ref{step1},
\ref{step2}, and \ref{step3}.  
Section \ref{sec:sketch} contains the statements of these propositions,
followed by proofs of the main theorems.
Sections~\ref{sec:proof-infinitesimal}, \ref{sec:compatibly fibred nd},
and~\ref{sec:proof-minimal} are devoted to the proofs of these propositions. 
In Section~\ref{sec:norm-square} we then use Theorem \ref{main}
and the local normal form theorem to re-prove Kirwan's result
that, for a Hamiltonian action of a compact Lie group,
the norm-square of a momentum map is minimally degenerate.
Finally, in Section~\ref{sec:Morse}, we mention some consequences of this fact.

\medskip

\noindent {\bf Acknowledgements.}  
The first author has been partially supported by Simons Foundation
Grant $\#208975$
and National Science Foundation Grant DMS--$1206466$.
The second author is partially supported
by the Natural Sciences and Engineering Research Council of Canada.
We would like to thank Gwyneth Whieldon for helping us resolve
a stubborn \LaTeX\ challenge.

% ------------------------------------------------------
\section{Statement of the main result}
\labell{sec:statement}
% ------------------------------------------------------

We begin by recalling Kirwan's definitions 
of a minimizing manifold
and of a minimally degenerate function.

Throughout this paper, we do not assume that the dimension
of a manifold is constant.  That is, by ``manifold'', we allow
a disjoint union of manifolds of different dimensions.
In particular, in Definition~\ref{def:minimizing}, 
if $C$ is not connected, we allow $N$ to have connected components
of different dimensions.  Similarly, we allow a vector bundle
over a manifold $W$ to have different ranks over different components of~$W$. 

\begin{Definition} \labell{def:minimizing}
Let $W$ be a smooth manifold and $f \colon\thinspace W \to \R$ 
a smooth function.
Let $C$ be a closed subset of $W$ on which $f$ is constant
and such that every point of $C$ is a critical point of $f$.
A \textbf{minimizing manifold} for $f$ along $C$ is a submanifold $N$ of $W$
that contains $C$ and that has the following two properties.
\begin{enumerate}
\item
For each point $x$ of $C$, the tangent space $T_xN$ is maximal
among subspaces of $T_xW$ on which the Hessian of $f$ is positive semidefinite.
\item
The restriction $f|_N$ of $f$ to $N$
attains its minimum exactly at the points of~$C$.
\end{enumerate}
\end{Definition}

\begin{Remark} \labell{one prime}
In Definition~\ref{def:minimizing}, Condition (1) can be replaced 
by the following condition:
\begin{enumerate}
\item[(1$'$)]
For each point $x$ of $C$, there exists a splitting
$T_xW = T_xN \oplus E_x$
such that $E_x$
is maximal among subspaces of $T_xW$ on which the Hessian of $f$
is negative definite.
\end{enumerate}
\end{Remark}

\begin{Definition} \labell{def:min-deg}
Let $W$ be a smooth manifold and $f \colon\thinspace W \to \R$ 
a smooth function.
We say $f$ is \textbf{minimally degenerate} if 
its critical set is a locally finite disjoint union of closed subsets 
along which there exist minimizing manifolds for $f$.
\end{Definition}

\begin{Remark}
Every Morse--Bott function is minimally degenerate.
Kirwan showed that one can apply the usual techniques of 
Morse--Bott theory to a minimally degenerate function 
even if the function is not a Morse--Bott function.
\end{Remark}

\begin{Remark} \labell{differences}
Definition \ref{def:min-deg} was proposed by Kirwan \cite[page~6]{kirwan:book},
with two differences.
First, Kirwan assumes that the set of critical points is a 
\emph{finite} disjoint union of closed subsets 
along which there exist minimizing manifolds for $f$.
Second, she requires that the minimizing manifolds be co-orientable.
As for the first difference, the weaker assumption ``locally finite''
is sufficient for our purposes;
it is equivalent to Kirwan's assumption ``finite'' when the manifold
is compact, as is the case for complex projective manifolds,
which is the case in which Kirwan was the most interested.
As for the second difference,
co-orientability is required for many consequences
(see Remark~\ref{rk:coorientable}) and is guaranteed in our 
main application in symplectic geometry;
it is not essential, though, when we study
the differential topological properties of minimally degenerate functions.
\end{Remark}

\begin{Remark} \labell{rk:coorientable}
The usual definition of a Morse--Bott function does not require the negative
normal bundle to a critical set to be orientable.  
However, to obtain Morse--Bott inequalities for ranks of cohomology groups 
with coefficients in a field $\F$, the negative normal bundles 
must be $\F$--{\bf orientable}.  
A minimally degenerate function
for which the normal bundles to the minimizing manifolds are $\F$--orientable
similarly
gives rise to Morse--Bott inequalities for cohomology with $\F$ coefficients.

Note that every vector bundle is $\Z/2\Z$--orientable, 
and that orientability is equivalent to $\Q$--orientability.
For further details, see \cite[\S 2.6]{Nicolaescu}.
\end{Remark}

\begin{Remark} \labell{restrict}
Let $M$ be a smooth manifold and $f \colon\thinspace M \to \R$
a smooth function. Suppose that $f$ is minimally degenerate.
Then, for every open subset $U$ of $M$,
the restriction $f|_U$ is minimally degenerate.
\end{Remark}

Our main result is that the existence of minimizing manifolds 
can be checked locally.  
Co-orientability of the minimizing manifolds 
is inherently a global property;
in the presence of an almost complex structure,
we give a condition that guarantees co-orientability 
and that can be checked locally.

\begin{Theorem} \labell{main}
Let $M$ be a smooth manifold and $f \colon\thinspace M \to \R$ 
a smooth function.
Suppose that every point in $M$ has an open neighbourhood $U$
such that $f|_U$ is minimally degenerate.  Then $f$ is minimally degenerate.

Suppose in addition that there exists an almost complex structure $J$ on $M$
such that at every critical point of $f$ the Hessian of $f$ is $J$--invariant.
Then $f$ is minimally degenerate with co-orientable minimizing manifolds.  
\end{Theorem}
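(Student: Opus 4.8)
The plan is to follow the three-step outline promised in the introduction: first a purely combinatorial reduction producing a canonical global decomposition of the critical set; then an \emph{infinitesimal} step constructing, over each piece of it, a subbundle of ``negative directions'' for the Hessian; and finally an \emph{integration} step upgrading this subbundle to a minimizing manifold. The co-orientability clause will come out of the infinitesimal step, by making all choices $J$-equivariant.

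\emph{Reduction.} The first thing to establish is that $f$ is locally constant on $\Crit(f)$. If $p$ is a critical point, choose a neighbourhood $U$ with $f|_U$ minimally degenerate, so $\Crit(f)\cap U = \bigsqcup_\alpha C_\alpha$ is a locally finite disjoint union of sets closed in $U$, each admitting a minimizing manifold; $p$ lies in a unique $C_{\alpha_0}$, and since the $C_\alpha$ are disjoint and closed, $U\ssminus\bigcup_{\alpha\ne\alpha_0}C_\alpha$ is a neighbourhood of $p$ on which $\Crit(f)$ coincides with $C_{\alpha_0}$, so $f$ is constant on it. Consequently, for each critical value $c$ the level piece $C_c := \Crit(f)\cap f\inv(c)$ is closed in $M$, the $C_c$ are pairwise disjoint, the family $\{C_c\}$ is locally finite, and near any of its points $C_c$ coincides with some $C_{\alpha_0}$ and hence admits a minimizing manifold. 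This reduces Theorem~\ref{main} to the following: if $C\subseteq M$ is closed, $f$ is constant and critical on $C$, and $C$ admits minimizing manifolds locally, then $C$ admits a minimizing manifold, co-orientable under the $J$-hypothesis. I would expect this reduction to be carried out in Section~\ref{sec:sketch}.

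\emph{Infinitesimal step --- the main obstacle.} Over a neighbourhood $U$ of a point of $C$, a local minimizing manifold gives a splitting $TM|_{C\cap U} = P_U\oplus E_U$ with $(E_U)_x$ maximal negative-definite for $\Hess_x f$ at each $x\in C\cap U$ (Remark~\ref{one prime}). The goal is to glue these into a global splitting $TM|_C = P\oplus E$. The real difficulty --- the reason minimal degeneracy is \emph{a priori} not local --- is that $\dim\ker\Hess_x f$ may jump as $x$ varies in $C$, so $TM|_C/\ker(\Hess f)$ is not a vector bundle and one cannot simply patch sections of a locally trivial bundle of definite Grassmannians. The rescuing observation is a piece of linear algebra: for a fixed symmetric form $Q$ and a fixed maximal negative-definite subspace $E_0$, every maximal negative-definite subspace of $Q$ is the graph of a linear map $E_0\to F_0$ (with $F_0$ the $Q$-orthogonal complement of $E_0$), and the set of such maps is convex; hence the space of maximal negative-definite subspaces of $Q$ is contractible and its sections glue under partitions of unity. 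Feeding this into a partition of unity subordinate to $\{U\}$, with the transitions $(E_U)_x\to(E_{U'})_x$ written in graph coordinates, yields a global subbundle $E\subseteq TM|_C$ with every fibre maximal negative-definite. For co-orientability, run the same construction $J$-equivariantly: fix a $J$-invariant metric $g$; $J$-invariance of $\Hess_x f$ makes $\ker\Hess_x f$ $J$-invariant and the $g$-symmetric endomorphism $A_x$ with $\Hess_x f = g(A_x\cdot,\cdot)$ commute with $J$, so the span of the negative eigenspaces of $A_x$ is a $J$-invariant maximal negative-definite subspace varying smoothly with $x$; this produces a global $E$ that is a complex, hence canonically oriented, subbundle.

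\emph{Integration step.} From the global splitting $TM|_C = P\oplus E$ I would build a submanifold $N\supseteq C$ with $TN|_C = P$ by patching graphs over the local minimizing manifolds with a partition of unity, arranged so that $N$ (a tube around $C$) deformation retracts onto $C$; then $\nu_{N/M}$ is isomorphic to the pullback of $E$, giving co-orientability when $E$ is complex, and Condition~(1) of Definition~\ref{def:minimizing} holds automatically. To get Condition~(2), first reduce it to a local statement: if each point of $C$ has a neighbourhood in $N$ on which $f>c$ off $C$, then the union of these neighbourhoods is a smaller minimizing manifold, and passing to it is harmless. Near $p\in C$ the function $f|_N$ has a critical point whose Hessian is positive semidefinite with kernel $\ker\Hess_p f$; the parametrized Morse (splitting) lemma reduces the local question to the behaviour of $f$ in these degenerate directions, which one settles by comparison with a genuine local minimizing manifold $N_U$ (on which $f>c$ off $C$) together with the fact that near $p$ the set $C$ is the entire critical set at level $c$. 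Of the three steps the infinitesimal one is where I expect the real work: the combinatorics and the patching of manifolds are routine, but globalizing the subbundle of maximal negative-definite directions is precisely where the non-uniform degeneracy of the Hessian must be confronted, and it is what gives the theorem its content.
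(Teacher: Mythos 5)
Your reduction and your infinitesimal step track the paper's proof closely (the paper glues the negative subspaces by extending the Hessian to a global symmetric tensor $B$ and taking the span of the $k$ lowest eigenspaces of the associated self-adjoint operator, rather than by convexity of graph coordinates; either can be made to work, but the paper's version produces $E$ on a whole \emph{neighbourhood} of $C$, which it then uses to realize $N$ as the zero set of a partition-of-unity sum of sections of $E$ cutting out the local models --- a precise form of your ``patching graphs''). The genuine gap is in your treatment of Condition~(2). You propose to verify that $f|_N$ attains its minimum exactly on $C$ ``by comparison with a genuine local minimizing manifold $N_U$'', but Condition~(2) does not transfer from one infinitesimally minimizing submanifold to another. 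For $f(x,y)=y^4-x^2$ on $\R^2$ with $C=\{0\}$, both $N_U=\{x=0\}$ and $N=\{x=y^2\}$ contain $C$ and satisfy Condition~(1), yet $f|_{N_U}=y^4$ has a strict minimum at $0$ while $f|_N\equiv 0$ fails Condition~(2) outright. A partition-of-unity patching produces exactly this kind of uncontrolled perturbation of the local models in the degenerate directions, so no comparison argument applied to the glued $N$ itself can succeed. (Two smaller problems in the same passage: the parametrized Morse lemma applied to $f|_N$ is vacuous when $\Hess(f|_N)$ degenerates, as in the example where $f|_N\equiv 0$; and $\ker\Hess_p(f|_N)$ can be strictly larger than $\ker\Hess_p f$, e.g.\ for the form $2xy$ with $T_pN=\{y=0\}$.)

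What is missing is a mechanism for \emph{correcting} $N$ after the gluing, and this is the content of Propositions~\ref{step2} and~\ref{step3}. One uses $N$ only to build a tubular neighbourhood $\pi\colon U\to N$ whose fibres are tangent along $C$ to the negative bundle $E$, and then replaces $N$ by the set $\tN$ of fibrewise critical points of $f$. After shrinking, $f$ is fibrewise nondegenerate with fibrewise maximum attained exactly on $\tN$, so $f\le f\circ\pi_{\tN}$ near $\tN$; each local minimizing manifold $Z_x$ is transverse to the fibres, so $\pi_{\tN}(Z_x)$ covers a neighbourhood of $x$ in $\tN$, and for $y\in Z_x\ssminus C$ one gets $f(\pi_{\tN}(y))\ge f(y)>f(C)$. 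This inequality is what transfers Condition~(2) from the local data to the global submanifold, and it is precisely the step your proposal does not supply; note that in the example above the fibrewise critical set of the horizontal fibration is $\{x=0\}$, i.e.\ the construction automatically selects the correct submanifold.
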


Tolman and Weitsman note in \cite[p.\ 759]{TW} 
that minimally degenerate functions
``morally ... look like the product 
of a minimum and a non-degenerate Morse--Bott function''.
Our locality result, Theorem \ref{main},
allows us to make this description rigorous:

\begin{Theorem} \labell{local-coords}
Let $M$ be a smooth $n$--dimensional manifold and $f \colon\thinspace M \to \R$ 
a smooth function.  Then the following conditions are equivalent.
\begin{enumerate}
\item[(a)]
For every critical point $c$, 
there exist coordinates $x_1,\dots,x_k,y_{k+1},\dots,y_n$ centered at $c$ 
such that in a neighbourhood of $c$
$$
f = f(\mathbf{x},\mathbf{y}) = g(\mathbf{y}) -\sum_{j=1}^k x_j^2\ ,
$$
where $g$ is a smooth function 
with minimal value $g(0)$ and no other critical values.

\item[(b)]
For every critical point $c$, 
there exist coordinates $x_1,\dots,x_k,y_{k+1},\dots,y_n$ centered at $c$ 
such that in a neighbourhood of $c$
$$
f = f(\mathbf{x},\mathbf{y}) = g(\mathbf{y}) + h(\mathbf{x})\ ,
$$
where $g$ is a smooth function 
with minimal value $g(0)$ and no other critical values,
and where $h$ is a Morse--Bott function.

\item[(c)]
\ $f$ is minimally degenerate.
\end{enumerate}
\end{Theorem}

% ------------------------------------------------------
\section{Outline of the proofs}
\labell{sec:sketch}
% ------------------------------------------------------

In this section we outline the proofs 
of Theorems~\ref{main} and~\ref{local-coords}.
We begin with a technical lemma:

\begin{Lemma} \labell{locally finite}
Let $M$ be a smooth manifold and $f \colon\thinspace M \to \R$ 
a smooth function.
Suppose that every point in $M$ has an open neighbourhood $U$
such that $f|_U$ is minimally degenerate.  
Then the critical set of $f$ is a locally finite disjoint union
of closed subsets on which $f$ is constant.
\end{Lemma}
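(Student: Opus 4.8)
The plan is to exhibit the required decomposition explicitly, as the partition of $\Crit(f)$ into level sets of $f$. For each critical value $c$ of $f$ set $C_c := \Crit(f) \cap f\inv(c)$. Since $f$ is smooth, $\Crit(f)$ is closed in $M$, so each $C_c$ is closed in $M$; the sets $C_c$ are pairwise disjoint, their union is $\Crit(f)$, and $f$ is identically $c$ on $C_c$. Thus closedness, disjointness, and the constancy of $f$ on the pieces require no hypothesis; the entire content of the lemma is that the family $\{C_c\}$ is locally finite. (One should resist the temptation to take the connected components of $\Crit(f)$ as the pieces instead: a single closed set on which a smooth function is constant can have infinitely many connected components --- a Cantor set realized as a minimum set, say --- so that decomposition need not be locally finite even when the hypotheses hold.)

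To prove local finiteness I would argue at an arbitrary point $x \in M$. If $x \notin \Crit(f)$, then $M \ssminus \Crit(f)$ is an open neighbourhood of $x$ meeting no $C_c$. If $x \in \Crit(f)$, choose an open neighbourhood $U$ of $x$ with $f|_U$ minimally degenerate. By Definitions~\ref{def:minimizing} and~\ref{def:min-deg}, $\Crit(f) \cap U$ is a locally finite (in $U$) disjoint union of subsets $C^U_i$, $i \in I$, each closed in $U$ and on which $f$ takes a constant value $c_i$. Local finiteness gives an open neighbourhood $V \subseteq U$ of $x$ that meets only the $C^U_i$ with $i$ in some finite subset $I_0 \subseteq I$. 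Then $\Crit(f) \cap V \subseteq \bigcup_{i \in I_0} C^U_i$, and since $C^U_i \subseteq \Crit(f) \cap f\inv(c_i) = C_{c_i}$, the set $V$ meets at most the finitely many pieces $C_{c_i}$, $i \in I_0$. Hence $\{C_c\}$ is locally finite, which proves the lemma.

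I do not expect a genuine obstacle here; the only thing to watch is a bookkeeping point. The local pieces $C^U_i$ are closed only in $U$, not in $M$, and the decompositions of $\Crit(f) \cap U$ supplied over different neighbourhoods $U$ need not be compatible with one another, so one cannot patch the $C^U_i$ directly. Passing to the global level sets $C_c$ finesses this, since each local piece lies inside exactly one $C_c$; only the critical values get collated. Note, finally, that the minimizing-manifold part of the hypothesis plays no role in this lemma --- it enters the later propositions, which produce and glue minimizing manifolds over the $C_c$ --- so Lemma~\ref{locally finite} is simply the point-set bookkeeping underlying Theorem~\ref{main}.
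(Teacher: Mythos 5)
Your proposal is correct and follows essentially the same route as the paper: the paper also takes the pieces to be $f^{-1}(c_i)\cap\Crit f$ for the critical values $c_i$, deducing local finiteness from the fact that near each point the locally minimally degenerate function has only finitely many critical values. Your write-up just spells out the local-finiteness bookkeeping (each local piece $C^U_i$ lands in exactly one level piece $C_{c_i}$) in more detail than the paper does.
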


\begin{proof}
The definition of ``minimally degenerate''
implies that each point has a neighbourhood 
in which the function takes only finitely many critical values.
The conclusion of the lemma then holds
when we take the closed subsets to be the intersections 
$ f^{-1}(c_i) \cap \Crit f $
where $c_i$ are the critical values of $f$.
\end{proof}

{
To prove Theorem~\ref{main}, we fix a manifold $M$
and a smooth function $f \colon\thinspace M \to \R$ 
that is locally minimally degenerate.
Lemma~\ref{locally finite} gives a decomposition of the critical set
of $f$ into a locally finite disjoint union of closed subsets
on which $f$ is constant.  
We may now focus on one such a subset; call it $C$.
Thus, $C$ is closed in $M$ and has a neighbourhood $W$ whose intersection
with the critical set of $f$ is exactly $C$,  and $f$ is constant on $C$.
We need to prove that there exists a minimizing manifold for $f$ along $C$.
We will do this in three steps, 
formulated below as Propositions \ref{step1}, \ref{step2}, and \ref{step3}.

For these propositions, we now fix a manifold $W$ 
and a smooth function 
$$f \colon\thinspace W \to \R,$$
and we assume that $f$ is constant 
on the set $C := \Crit(f)$ of its critical points.

An \textbf{infinitesimally minimizing} manifold for $f$ along $C$ 
is a submanifold $N$ of~$W$ that contains $C$ and that satisfies
Property (1) of Definition~\ref{def:minimizing}:
\begin{quotation}
For each point $x$ of $C$, the tangent space $T_xN$ is maximal
among subspaces of $T_xW$ on which the Hessian of $f$ is positive semidefinite.
\end{quotation}

\begin{Proposition}[Infinitesimal minimal degeneracy] \labell{step1}
Suppose that every point $x$ of $C$ has an open neighbourhood $U_x$
such that there exists an infinitesimally minimizing manifold 
for $f$ along $C \cap U_x$
Then there exists an infinitesimally minimizing manifold $N$ for $f$ along $C$.

Suppose in addition that there exists an almost complex structure $J$ on $W$
such that at every point of $C$ the Hessian of $f$ is $J$--invariant. 
Then there exists such an~$N$ that is co-orientable.
\end{Proposition}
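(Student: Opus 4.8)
The goal is purely infinitesimal: we must produce a single submanifold $N \subseteq M$ containing $C$ whose tangent space at each $x \in C$ is maximal among subspaces of $T_xM$ on which $\Hess f$ is positive semidefinite. The hypothesis gives us such submanifolds locally, say $N_\alpha \subseteq U_\alpha$ over an open cover $\{U_\alpha\}$ of $C$. The plan is to extract from this local data a global subbundle $V$ of $TM|_C$ — namely at each $x \in C$ the distinguished subspace $T_x N_\alpha$, which by Condition~(1) is the \emph{unique} maximal positive-semidefinite subspace for $\Hess f|_x$ once we fix a complement, since the kernel of the Hessian is forced to lie inside it — and then exponentiate (or otherwise fatten) $V$ to an honest submanifold. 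First I would check that the subspace $T_x N_\alpha$ does not depend on $\alpha$: on the overlap $U_\alpha \cap U_\beta$ both $N_\alpha$ and $N_\beta$ are minimizing manifolds for $C \cap U_\alpha \cap U_\beta$, so by Condition~(1) their tangent spaces at each common point $x$ are both maximal positive-semidefinite subspaces of $T_xM$ for the same Hessian. Here one must be slightly careful: ``maximal positive-semidefinite subspace'' is not unique in general, but a maximal one must contain the radical (kernel) of the Hessian, and modulo the radical it is a maximal positive-definite subspace of a fixed nondegenerate form, hence of fixed dimension; the subspaces $T_x N_\alpha$ and $T_x N_\beta$ therefore have the same dimension. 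To get genuine equality of the subspaces I would invoke Condition~(1') of Remark~\ref{one prime} together with the fact that $N_\alpha$ and $N_\beta$ agree as sets near $C$ up to finite order — more carefully, since both are minimizing, near $C$ they both sit inside the locus where $f$ achieves its fibre-minimum; in any case the maximal positive-semidefinite subspace containing the radical and tangent to a minimizing manifold is pinned down, so $T_x N_\alpha = T_x N_\beta =: V_x$.

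Having a well-defined subbundle $V \subseteq TM|_C$, the next step is to promote it to a submanifold $N$ with $T_xN = V_x$ for all $x\in C$. The cleanest way is to pick a Riemannian metric on $M$, form the total space of the subbundle $V$, and use the Riemannian exponential map: $N := \{\exp_x(v) : x \in C,\ v \in V_x,\ |v| < \epsilon(x)\}$ for a suitable positive function $\epsilon$. For $\epsilon$ small this is an embedded submanifold (the exponential is a diffeomorphism from a tube in $TM|_C$ onto a tube around $C$, and $V$ is a smooth subbundle of $TM|_C$ restricted over $C$; one extends $V$ to a neighbourhood and restricts the exponential appropriately). By construction $N \supseteq C$ and $T_xN = V_x$ is the prescribed maximal positive-semidefinite subspace, so Condition~(1) of Definition~\ref{def:minimizing} holds. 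Condition~(2) is deliberately \emph{not} claimed here — that is the business of Propositions~\ref{step2} and~\ref{step3}.

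For the co-orientability addendum: the normal bundle of $N$ along $C$ is $TM|_C / V$, which — choosing a $J$-invariant metric, possible since $J$ is only needed at points of $C$ where the Hessian is $J$-invariant, and one can average a metric there — is identified with the maximal \emph{negative-definite} subspace $E_x$ for $\Hess f|_x$ (Remark~\ref{one prime}). The key point is that the hypothesis ``$\Hess f|_x$ is $J$-invariant'' forces the positive, negative, and null eigenspaces of the Hessian to each be $J$-invariant, because $J$ commutes with the self-adjoint operator representing the Hessian (with respect to the $J$-invariant metric) and therefore preserves each of its spectral subspaces, as well as the splitting of the kernel... more directly: $E_x$ inherits a complex structure from $J$, hence a canonical orientation, and these vary continuously in $x$, so the normal bundle $TM|_C/V \cong E$ is an orientable (indeed complex) vector bundle over $C$; pushing this orientation off of $C$ to a tubular neighbourhood inside $N$ gives a co-orientation of $N$ in $M$. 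I expect the main obstacle to be the gluing step — verifying that the locally defined tangent spaces $T_x N_\alpha$ genuinely agree on overlaps, i.e.\ that the maximal positive-semidefinite subspace attached to a minimizing manifold is canonical and not merely canonical up to dimension; one resolves this by isolating the radical of the Hessian (which every maximal positive-semidefinite subspace must contain) and arguing that a minimizing manifold's tangent space, being maximal and containing a distinguished complement's annihilator data via Condition~(1'), is uniquely determined.
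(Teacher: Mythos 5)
Your proposal has two genuine gaps, both at steps you yourself flagged as delicate. First, the gluing step fails: the tangent spaces $T_xN_\alpha$ and $T_xN_\beta$ of two minimizing manifolds at a common point $x\in C$ need \emph{not} coincide. A maximal positive semidefinite subspace for $\Hess f|_x$ is determined only up to dimension, not as a subspace, and Condition~(2) does not pin it down. Concretely, take $M=\R^2$, $f(x,y)=x^2-y^2$, $C=\{0\}$: both the $x$--axis and the line $\{y=x/2\}$ are minimizing manifolds for $C$ (on each, $f$ restricts to a positive multiple of $t^2$, with minimum exactly at the origin), yet their tangent spaces at $0$ differ. So there is no well-defined subbundle $V\subseteq TM|_C$ obtained by reading off tangent spaces of the local data, and your appeal to ``the radical plus Condition~(1$'$)'' cannot rescue this, since the radical only forces containment of the kernel, leaving a positive-dimensional family of maximal positive semidefinite subspaces. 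Second, even granting a canonical choice of $V_x$ (e.g.\ the span of the nonnegative eigenspaces of a self-adjoint representative of the Hessian), the exponentiation step does not produce a submanifold: $C$ is merely a \emph{closed subset} of the critical set and can be badly singular (the paper's own example is the cone $|z|^2=|w|^2$ in $\C^2$), so ``the total space of the subbundle $V$ over $C$'' is not a manifold and its image under $\exp$ has no reason to be one. This is precisely the difficulty the proposition must overcome.

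The paper's route avoids both problems. It first builds a canonical \emph{negative} bundle $E\subset TM|_U$ on a neighbourhood $U$ of $C$ by extending the Hessian to a global symmetric $2$--tensor via a partition of unity, converting it to a self-adjoint endomorphism $A$ with a metric, and taking the span of the eigenspaces for the $k$ most negative eigenvalues where a spectral gap persists (Lemma~\ref{extend Hessian}); unlike the maximal positive semidefinite subspace, this spectral subspace is unique and varies smoothly. It then writes each local $N_x$ as the zero set of a local section $h_x$ of $E$, normalized so that the vertical differential of $h_x$ restricted to $E$ is the identity at every point of $C$, patches these by a partition of unity into a global section $h$, and takes $N=h^{-1}(0)$; the normalization guarantees the vertical differential of $h$ is still the identity on $E$ along $C$, so the implicit function theorem makes $N$ a submanifold with $T_xN=\ker(d_Vh|_x)$ complementary to $E_x$ (Lemmas~\ref{exists section} and~\ref{exists inf minimizing}). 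Note that $N$ need not agree with any $N_\alpha$, and only the local \emph{defining equations}, not the local tangent spaces, are glued. Your co-orientability argument (the normal bundle is identified with $E$, which is complex for a $J$--invariant Hessian and metric, hence orientable) does match the paper's and is fine once $E$ and $N$ are constructed correctly.
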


\begin{Definition} \labell{compatibly fibred}
A \textbf{compatibly fibred neighbourhood} of $C$
is a neighbourhood $U$ of $C$ together with a submersion 
$\pi \colon\thinspace U \to \calO$
such that at every point $x$ in $C$ 
the vertical tangent space $\ker d\pi|_x$
is maximal among subspaces of $T_xW$ on which the Hessian of $f$
is negative definite.
The compatibly fibred neighbourhood is \textbf{fibrewise orientable} if 
its vertical tangent bundle $\ker d\pi \to U$ is an orientable vector bundle. 
\end{Definition}

\begin{Proposition}[Compatibly fibrated neighbourhood] 
\labell{step2}
Suppose that there exists an 
infinitesimally minimizing manifold $N$ for $f$ along $C$.
Then $C$ has a compatibly fibred neighbourhood.

Suppose in addition that $N$ is co-orientable.
Then $C$ has a compatibly fibred neighbourhood that is fibrewise orientable.
\end{Proposition}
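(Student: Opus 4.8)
The plan is to build the fibration from a tubular neighbourhood of $N$, using Remark~\ref{one prime} to convert Condition~(1) of Definition~\ref{def:minimizing} into a statement about vertical directions. First I would fix a Riemannian metric on $M$ and set $E := TN^\perp$, so that $TM|_N = TN \oplus E$ and $E$ is identified with the normal bundle $\nu_N$ of $N$. By Remark~\ref{one prime}, the hypothesis that $N$ satisfies Condition~(1) says precisely that for every $x \in C$ the fibre $E_x$ is maximal among subspaces of $T_xM$ on which the Hessian of $f$ is negative definite. So it suffices to produce a submersion $\pi \colon U \to \calO$, with $U$ an open neighbourhood of $C$ in $M$, whose vertical tangent space at each $x \in N$ (in particular at each $x \in C$) equals $E_x$.

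For this I would invoke the tubular neighbourhood theorem. Since $N$ is an embedded submanifold, it is closed in some open set $A \subseteq M$, and $A$ is an open neighbourhood of $C$; working inside $A$, the normal exponential map restricts on a neighbourhood of the zero section of $E \to N$ to a diffeomorphism $\Psi$ onto an open neighbourhood $U$ of $N$, equal to the identity on $N$, and such that for every $n \in N$ the differential $d\Psi|_{0_n}$ carries the canonical splitting $T_{0_n}E = T_nN \oplus E_n$ to $T_nM = T_nN \oplus T_nN^\perp$ by the inclusions. Let $p \colon E \to N$ be the bundle projection, set $\calO := N$, and define $\pi := p \circ \Psi^{-1} \colon U \to N$. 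Then $U$ is an open neighbourhood of $C$ in $M$; $\pi$ is a submersion, being a composite of a diffeomorphism with the restriction of a vector bundle projection; and along $N$ the description of $d\Psi|_{0_n}$ gives $\ker d\pi|_n = E_n$. Together with the previous paragraph this shows that $(U,\pi)$ is a compatibly fibrated neighbourhood of $C$.

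For the refined statement, suppose $N$ is co-orientable, that is, $\nu_N \cong E$ is an orientable vector bundle. The vertical tangent bundle of the projection $p \colon E \to N$ is canonically $p^*E$, since the tangent space to a fibre of $E$ is that fibre; transporting by the diffeomorphism $\Psi$, the vertical tangent bundle of $\pi$ is isomorphic to $\pi^*E \cong \pi^*\nu_N$. A pullback of an orientable bundle is orientable, so this bundle is orientable; that is, $(U,\pi)$ is fibrewise orientable. I do not anticipate a serious obstacle: the proposition is essentially a repackaging of the tubular neighbourhood theorem and Remark~\ref{one prime}, and the only points needing care are arranging that $\pi$ be defined, and be a submersion, on a full $M$-neighbourhood of $C$ rather than merely along $N$ (which is what a genuine tubular neighbourhood provides, after passing to an open set in which $N$ is closed), and identifying the vertical tangent bundle with $\pi^*\nu_N$ in the co-orientable case. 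The possibly varying dimension or non-compactness of $N$ causes no difficulty, since the tubular neighbourhood theorem applies verbatim to embedded submanifolds of arbitrary locally constant dimension.
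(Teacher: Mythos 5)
Your proposal is correct and follows essentially the same route as the paper: take a tubular neighbourhood of $N$, use its projection as the submersion $\pi$, apply Remark~\ref{one prime} to see that the vertical tangent spaces along $C$ are maximal negative definite subspaces, and observe that co-orientability of $N$ orients the fibres. Your version just spells out the exponential-map construction and the identification of the vertical tangent bundle with $\pi^*\nu_N$ in slightly more detail than the paper does.
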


\begin{Proposition}[Minimal degeneracy] \labell{step3}
Let $\tN$ denote the set of fibrewise critical points
of a compatibly fibred neighbourhood 
$\pi \colon\thinspace U \to \calO$ of $C$.
Then, after possibly intersecting with a smaller neighbourhood of $C$,
the following is true.
\begin{enumerate}
\item[(a)]
$\tN$ is an 
infinitesimally minimizing manifold for $f$ along $C$. 

If in addition the compatibly fibred neighbourhood of $C$
is fibrewise orientable, then $\tN$ is co-orientable.

\item[(b)]
Suppose that every point $x$ of $C$ has an open neighbourhood $U_x$
such that there exists a 
minimizing manifold for $f$ along $C \cap U_x$.
Then $\tN$ is a minimizing manifold for $f$ along $C$.
\end{enumerate}
\end{Proposition}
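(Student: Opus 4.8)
The plan is to work near each point of $C$ in local coordinates $(x,y)$ adapted to the submersion, in which $\pi$ is the projection $(x,y)\mapsto y$, and to combine the implicit function theorem with the Morse lemma with parameters. \emph{For part (a):} the fibrewise Hessian of $f$ is negative definite at every point of $C$ (this is part of the definition of a compatibly fibrated neighbourhood), and negative-definiteness is an open condition, so after shrinking $U$ we may assume it is negative definite throughout $U$. Then $\tN=\{\partial_x f=0\}$, and since $\partial_x(\partial_x f)$ is the now-invertible fibrewise Hessian, the implicit function theorem exhibits $\tN$ locally as a graph $x=\sigma(y)$; hence $\tN$ is a submanifold, and it contains $C$ because $df$, and in particular $\partial_x f$, vanishes on $C$. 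Being a graph over the base, $T\tN$ is everywhere transverse to the vertical bundle $V=\ker d\pi$, so $TM|_{\tN}=T\tN\oplus V|_{\tN}$ and the normal bundle of $\tN$ is isomorphic to $V|_{\tN}$; this is orientable when $U$ is fibrewise orientable, giving co-orientability of $\tN$. Finally, applying Remark~\ref{one prime} to the splitting $TM|_{\tN}=T\tN\oplus V|_{\tN}$, Condition~(1) of Definition~\ref{def:minimizing} for $\tN$ is equivalent to: for each $x\in C$, the subspace $V_x$ is maximal among subspaces of $T_xM$ on which the Hessian of $f$ is negative definite — which is exactly the defining property of a compatibly fibrated neighbourhood. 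This proves (a).

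For part (b), fix $p\in C$. Since $x=\sigma(y)$ is a nondegenerate fibrewise maximum of $f$, the Morse lemma with parameters provides a fibre-preserving change of coordinates $(x,y)\mapsto(u,y)$ near $p$ in which $f=g(y)-|u|^2$, where $g(y)=f(\sigma(y),y)=f|_{\tN}$. In these coordinates $\tN=\{u=0\}$, $\Crit f=\{u=0,\ dg=0\}$, and writing $\rho\colon(u,y)\mapsto(0,y)$ for the induced retraction onto $\tN$, we have $f\le f\circ\rho$ with equality exactly on $\tN$, and $\rho$ is the identity on $\tN$, in particular on $C$. Thus, near $p$, Condition~(2) for $\tN$ amounts to the assertion that $g$ attains its minimum value $c:=f|_C$ exactly on its critical set.

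Now bring in a local minimizing submanifold $N_p\subseteq U_p$ for $C\cap U_p$, so that $T_pN_p$ is maximal positive semidefinite and $f|_{N_p}\ge c$ with equality exactly on $C\cap U_p$. Since $V_p$ is a maximal negative definite subspace, $\dim T_pN_p=\dim M-\dim V_p=\dim\tN$, and since a subspace that is both positive semidefinite and negative definite for the Hessian is trivial, $T_pN_p\cap V_p=0$; as $\ker d\rho_p=V_p$, the map $d\rho_p$ restricts to an isomorphism $T_pN_p\to T_p\tN$. Hence $\rho$ restricts to a diffeomorphism from a neighbourhood of $p$ in $N_p$ onto a neighbourhood of $p$ in $\tN$. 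Therefore every $q\in\tN$ close enough to $p$ equals $\rho(q')$ for a unique $q'$ near $p$ in $N_p$, and $f(q)=f(\rho(q'))\ge f(q')\ge c$; if moreover $f(q)=c$ then $f(q')=c$, so $q'\in C$, whence $q=\rho(q')=q'\in C$. Thus $f\ge c$ on a neighbourhood of $p$ in $\tN$, with equality only on $C$. Covering $C$ by such neighbourhoods and replacing $\tN$ by its intersection with their union gives Condition~(2); together with part (a) this shows $\tN$ is a minimizing submanifold for $C$.

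The main obstacle is part (b): relating the canonical fibrewise-critical set $\tN$, which we have direct control of, to the a priori unrelated local minimizing submanifolds $N_p$, which is where the hypothesis enters. The idea that unlocks this is that the fibrewise-maximum retraction $\rho$ does two things at once — it carries $N_p$ diffeomorphically onto $\tN$ near $p$, and it can only increase $f$ — so a lower bound for $f$ on $N_p$ transfers to a lower bound for $f$ on $\tN$. The remaining points (shrinking $U$ so that the fibrewise Hessian stays negative definite, and shrinking $U_p$ so that $\rho|_{N_p}$ is a genuine diffeomorphism onto a neighbourhood of $p$ in $\tN$) are routine.
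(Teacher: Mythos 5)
Your proposal is correct and follows essentially the same route as the paper: part (a) via the implicit function theorem applied to the vertical differential of $f$ (with co-orientability coming from the vertical tangent bundle being the normal bundle of $\tN$), and part (b) by establishing that $f$ attains its fibrewise maximum exactly on $\tN$ and then pushing the local minimizing submanifolds $N_p$ onto $\tN$ by the fibre projection, using $f\le f\circ\rho$ together with transversality of $T_pN_p$ to the vertical space. The only cosmetic difference is that you invoke the Morse lemma with parameters to get the fibrewise-maximum property, where the paper obtains it more directly by shrinking the tubular neighbourhood (via the fibrewise exponential map) until the fibrewise second derivative is negative definite throughout.
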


We prove Proposition~\ref{step1} in Section~\ref{sec:proof-infinitesimal};
the main challenge in the proof is to ``patch together" minimizing manifolds
for neighbourhoods of points in $C$.
Proposition~\ref{step2} is a consequence of the tubular neighbourhood
theorem; we prove it in Section~\ref{sec:compatibly fibred nd}.
We prove Proposition~\ref{step3} in Section~\ref{sec:proof-minimal};
Part (a) is a consequence of the implicit function theorem;
Part (b) requires additional local arguments.

}

We conclude this section with proofs of Theorems~\ref{main}
and~\ref{local-coords}.

\begin{proof}[Proof of Theorem~\ref{main}]
Let $M$ be a manifold and $f \colon\thinspace M \to \R$ a smooth function.
Suppose that every point of $M$ has an open neighbourhood $U$
such that $f|_U$ is minimally degenerate.

By Lemma~\ref{locally finite},
the critical set of $f$ is a locally finite disjoint union
of closed subsets on which $f$ is constant.
Let $C$ be one of these subsets.
We may restrict our attention to an open neighbourhood $W$ of $C$
whose intersection with the set $\Crit(f)$ of critical points of $f$
is equal to $C$.

By Proposition~\ref{step1}, there exists an infinitesimally minimizing 
manifold $N$ for $f$ along $C$. 
By Proposition~\ref{step2},
$C$ has a compatibly fibred neighbourhood $\pi \colon\thinspace U \to \calO$.
By Proposition~\ref{step3},
after possibly restricting to a smaller neighbourhood of $C$,
the set $\tN$ of fibrewise critical points of $\pi$
is a minimizing manifold for $f$ along $C$, as required.

Now suppose, in addition, that there exists an almost complex structure $J$
on $M$ such that at every critical point of $f$ the Hessian of $f$ 
is $J$-invariant.
By Proposition~\ref{step1}, 
there exists an infinitesimally minimizing manifold $N$ for $f$ along $C$ 
that is coorientable.   By Proposition~\ref{step2},
$C$ has a compatibly fibred neighbourhood $\pi \colon\thinspace U \to \calO$
that is fibrewise orientable.
By Proposition~\ref{step3}, after possibly restricting 
to a smaller neighbourhood of $C$, the set $\tilde{N}$
of fibrewise critical points of $\pi$
is a minimizing submanifold for $f$ along $C$ and is coorientable, as required.
\end{proof}

\begin{proof}[Proof of Theorem~\ref{local-coords}]

Clearly, (a) implies (b): 
take $h(\mathbf{x}) = - \sum_{j=1}^k x_j^2$.

Suppose that (b) holds.   
Let $c$ be a critical point and $(\mathbf{x},\mathbf{y})$ coordinates 
as in~(b). 
By the Morse--Bott Lemma, 
after a further change of coordinates in $\R^k$,
we can bring $h$ to the form
$h(x) = h(0) + x_1^2 + \ldots + x_{\ell}^2 
 - x_{\ell+1}^2 - \ldots - x_m^2$ near $x=0$,
with $0 \leq \ell \leq m \leq k$.
Then, near $c$, the set of critical points is 
$$ \{ (x,y) \ | \ x_1 = \ldots = x_m = 0 \text{ and }  g(y)=g(0) \},$$
and $\{ x_{\ell+1} = \ldots = x_m = 0 \}$ 
is a minimizing submanifold for $f$ along this set.
Because $c$ was arbitrary, $f$ is locally minimally degenerate.
Theorem~\ref{main} guarantees that $f$ is (globally) minimally degenerate.  
That is, (c) holds.

To show that (c) implies (a), we need
to express a minimally degenerate function in local coordinates.
For this
we use a parametrized version of the Morse--Bott Lemma.
Complete details of the proofs of the Morse Lemma and the Morse--Bott Lemma 
are spelled out by Banyaga and Hurtubise in \cite{BH}.  
The parametrized version that we use is described by H\"ormander
in \cite[Lemma C.6.1]{hormander}).

Namely, suppose that $f$ is a minimally degenerate function, 
and let $c$ be a critical point.  
Choose coordinates
$x_1',\dots,x_k',y_{k+1},\dots,y_n$ on a neighbourhood~$U$ of~$c$,
centred at~$c$, such that
$\{ \mathbf{x'}=0 \}$ defines a minimizing submanifold
for $f$ along $U \cap \text{Crit} f$,
and such that the Hessian of $f$ is negative definite
on the subspace of $T_cM$ that is represented by $\R^k \times \{ 0 \}$.
Applying the parametrized version of the Morse lemma,
we find coordinates $(\mathbf{x},\mathbf{y})$ centred at $c$
in which $f$ has the desired form
$$
f = f(\mathbf{x},\mathbf{y}) = g(\mathbf{y}) -\sum_{j=1}^k x_j^2\ ,
$$
where $g$ is a smooth function.  Minimal degeneracy 
guarantees 
that $g$ must attain its minimum at $\mathbf{y}=0$
and, after possibly shrinking the neighbourhood of $c$,  
that $g$ has no critical values except its minimal value.
\end{proof}

% ------------------------------------------------------
\section{Existence of infinitesimally minimizing submanifolds}
\labell{sec:proof-infinitesimal}
% ------------------------------------------------------

The purpose of this section is to prove Proposition~\ref{step1}.

Let $W$ be a smooth manifold and $f \colon\thinspace W \to \R$ 
a smooth function. 
Assume that $f$ is constant on the set $C := \Crit(f)$ of its critical points.
For every point $x$ of~$C$, let $U_x$ be an open neighbourhood of $x$,
and let $N_x \subset U_x$ be an infinitesimally minimizing manifold
for $f$ along $C \cap U_x$.
To prove the proposition, 
we need to find an infinitesimally minimizing submanifold $N$ for $f$ along $C$,
and, in the presence of an appropriate almost complex structure,
to show that $N$ is co-orientable.

We would like to obtain such an $N$ by
``patching together" the submanifolds $N_x$.
A priori it is not clear how to ``patch together'' submanifolds.
We can ``patch together'' functions, by means of a partition of unity,
so our first attempt is to express each $N_x$ as the regular zero set 
of a function 
$h_x \colon\thinspace U_x \to \R^k$ and to take $h := \sum \rho_i h_{x_i}$,
where $\{ \rho_i \}$
is a partition of unity on the union of the sets $U_x$
with $\supp \rho_i \subset U_{x_i}$.
To guarantee that zero remains a regular value of $h$ near $C$,
we require that the differentials of $h_{x_i}$ and $h_{x_j}$
coincide at the points of $C \cap U_{x_i} \cap U_{x_j}$.
If this can be arranged then 
$$
\big\{ h=0 \big\} \cap \big\{ \text{an appropriate neighbourhood of $C$} \big\}
$$
is a 
minimizing manifold for $f$ along $C$.

One problem with this approach is that
a regular level set of a function to $\R^k$
must have a trivial normal bundle.
So this method already cannot work in the case 
that $f$ is a Morse--Bott function
and the negative normal bundle of $C$ is nontrivial.
To fix this, instead of working with functions to $\R^k$,
we work with sections of a rank $k$ vector bundle~$E$.
We carry out this plan in the following three lemmas.

First, we find a sub-bundle of the tangent bundle on which the Hessian
is negative definite.

\begin{Lemma} \labell{extend Hessian}
There exists a neighbourhood $U$ of $C$,
and a sub-bundle $E$ of $TW|_U$,
such that at each point $x$ of $C$
the subspace $E_x$ of $T_xW$ is maximal
among subspaces 
on which the Hessian $\Hess f|_x$ is negative definite. 

Moreover, let $J$ be an almost complex structure on $W$
such that at every point of $C$ the Hessian $\Hess f|_x$
is $J$--invariant.  Then the bundle $E$ may be chosen to be complex, 
hence orientable.
\end{Lemma}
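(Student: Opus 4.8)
The plan is to prove Lemma~\ref{extend Hessian} by a pointwise-to-global patching argument for the subbundle $E$, using that the condition ``$E_x$ is maximal among subspaces on which $\Hess f|_x$ is negative definite'' is an \emph{open} condition in the Grassmannian and is equivalent to a rank condition. First I would fix, for each $x \in C$, the infinitesimally minimizing submanifold $N_x \subset U_x$ and a complementary subbundle $E^x$ of $TM|_{N_x}$ (for instance the orthogonal complement of $TN_x$ with respect to a Riemannian metric, restricted to $C \cap U_x$ and extended to a neighbourhood), so that over $C \cap U_x$ the fibre $E^x_{x'}$ is maximal among subspaces on which $\Hess f|_{x'}$ is negative definite. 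Writing $k = \dim E^x$ near a given component, the content of ``maximal negative definite'' is: $\Hess f|_{x'}$ restricted to $E^x_{x'}$ is negative definite, and $k$ equals the index of $\Hess f|_{x'}$. Since negative definiteness of a symmetric bilinear form on a moving $k$-plane is an open condition, each such $E^x$ remains ``fibrewise maximal negative definite at the points of $C$'' after shrinking $U_x$.

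Next I would patch these locally defined $k$-plane subbundles. The cleanest route is to introduce an auxiliary Riemannian (or Hermitian, in the complex case) metric $g$ on $M$: over a neighbourhood of $C$, at each point $x$ near $C$ let $E_x$ be the span of the eigenvectors of the $g$-symmetric operator $A_x$ representing $\Hess f|_x$ corresponding to the negative eigenvalues. At points of $C$ this is literally the maximal negative-definite subspace and has the correct dimension $k$; but at nearby non-critical points $\Hess f|_x$ need not be the Hessian of $f$ (the Hessian is only invariantly defined at critical points) — however $\Hess f|_x$ \emph{is} globally defined as a symmetric form once we use the Levi-Civita connection of $g$, and near $C$ its negative eigenvalues stay bounded away from $0$ by compactness/continuity, so the negative eigenspace $E_x$ is a smooth subbundle on a neighbourhood $U$ of $C$. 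By construction $E_x \subset T_xM$ is maximal negative-definite for $\Hess f|_x$ at each $x \in C$, which is the first assertion. This also sidesteps the ``patching submanifolds'' difficulty entirely: we only ever patch the metric, which is routine via a partition of unity, and the eigenspace construction is canonical.

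For the complex refinement, suppose $J$ is an almost complex structure with $\Hess f|_x$ being $J$-invariant at every $x \in C$. I would choose the auxiliary metric $g$ to be $J$-compatible (average any metric over $J$, or take $g(\cdot,\cdot) = \tfrac12(h(\cdot,\cdot) + h(J\cdot,J\cdot))$ for any $h$); then the symmetric operator $A_x$ representing $\Hess f|_x$ commutes with $J$ at points of $C$, so its negative eigenspace $E_x$ is $J$-invariant there. One must be slightly careful: $J$-invariance of $E_x$ is only guaranteed \emph{at points of $C$}, not on all of $U$, since $\Hess f|_x$ need not be $J$-invariant off $C$. But $C$ is closed in $U$, so after possibly shrinking one can either work with a tubular-neighbourhood retraction $r \colon U \to C$ and pull back the restriction $E|_C$ — which is an honest complex vector bundle, hence (as a complex bundle) canonically oriented — or simply note that a subbundle of $TM|_U$ that is $J$-invariant along the closed set $C$ can be deformed rel nothing to one that is $J$-invariant on a neighbourhood (homotope $E$ into the open set where $\Hess f$ is ``close to $J$-invariant'' and apply a $J$-invariant projection). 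Either way $E$ becomes a complex subbundle, and every complex vector bundle is orientable.

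The main obstacle I anticipate is the tension between where the Hessian is intrinsically meaningful (only on $C$, or more precisely on $\Crit f$) and where we need a genuine subbundle (on an open neighbourhood $U$). The resolution is exactly the observation that, once a connection is fixed, the Hessian extends to a smooth symmetric $2$-tensor on all of $M$, and the conditions we care about (negative definiteness on the $k$-plane, $J$-invariance) are required to hold only along $C$, where they do hold by hypothesis and are stable under the extension; openness of these conditions then propagates them to a neighbourhood automatically, at which point the eigenspace bundle is well-defined and smooth there. I would make sure to state clearly that the extended ``Hessian'' agrees with the intrinsic one on $C$ and that none of the conclusions depend on the choice of connection or metric, since they are only asserted at points of $C$.
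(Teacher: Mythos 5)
Your overall strategy coincides with the paper's: extend the Hessian to a globally defined symmetric $2$-tensor, convert it via an auxiliary metric into a self-adjoint operator $A$, and take an eigenspace bundle; in the complex case, make everything $J$-invariant so the eigenbundle is complex. However, two steps would fail as written. The first is the claim that ``the span of the eigenvectors of $A_x$ with negative eigenvalues'' is a smooth subbundle on a neighbourhood of $C$. Its rank can jump off $C$: at a point of $C$ of index $k$ the Hessian typically has nontrivial kernel (this degeneracy is the whole point of the paper), so $\lambda_{k+1}=0$ there, and at nearby points $\lambda_{k+1}$ may become negative. For example, for $f=-x^2-y^4$ on $\R^2$ (which satisfies Condition (1) with $N=\{x=0\}$), the negative eigenspace of the extended Hessian has rank $1$ on the $x$-axis and rank $2$ off it. Your assertion that the negative eigenvalues stay bounded away from $0$ is correct only for the first $k$ of them. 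The repair is what the paper does: on each piece $C_k$ of $C$ where the index equals $k$, work in a neighbourhood where $\lambda_k<\lambda_{k+1}$ (which holds near $C_k$ since $\lambda_k<0\le\lambda_{k+1}$ there), take $E$ to be the sum of the eigenspaces of the $k$ smallest eigenvalues, and shrink the neighbourhoods of the various $C_k$ so their closures are disjoint.

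The second problem is in the complex case. Your first workaround---pulling back $E|_C$ along a tubular-neighbourhood retraction $r\colon U\to C$---is not available, because $C$ is merely a closed subset, not a submanifold (again, precisely the situation this paper is designed to handle), so no such retraction exists in general; your second workaround is too vague to verify. The clean fix, used in the paper, is to average not only the metric but also the extended tensor $B$ over $J$. This averaging does not change $B$ at points of $C$, where $B$ equals the Hessian and is already $J$-invariant by hypothesis, so the maximality statement on $C$ is unaffected; and it makes $A$ commute with $J$ on all of $U$, so the eigenbundle $E$ is $J$-invariant everywhere, hence a complex and therefore orientable subbundle. With these two corrections your argument becomes essentially identical to the paper's proof.
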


\begin{proof}
We will construct such a bundle $E$ as a sum of eigenbundles
of a fibrewise automorphism $A \colon\thinspace TW \to TW$, near $C$.

First, we will extend the Hessian of $f$, which is only defined at critical
points, to a symmetric $2$--tensor $B$ that is defined on all of $W$.
For this, let $\{ U_\alpha \}$
be domains of coordinate charts that cover $W$;
let $B_\alpha$ be the symmetric $2$--tensor on $U_\alpha$ 
that in the local coordinates on $U_\alpha$ 
is represented by the matrix of second partial derivatives of $f$;
and take $B = \sum \rho_\alpha B_\alpha$,
where $\{ \rho_\alpha \colon\thinspace W \to \R \}$
is a partition of unity with $\supp \rho_\alpha \subset U_\alpha$.

Next, choose a Riemannian metric $\left< \cdot , \cdot \right>$ on $W$,
and define $A \colon\thinspace TW \to TW$ by $B(u,v) = \left< u ,Av \right>$.
Because $B(\cdot,\cdot)$ is symmetric, $A$ is self adjoint
with respect to $\left< \cdot , \cdot \right>$, and so $A$ is diagonalizable. 
For each $x' \in W$, let 
$\lambda_{1,x'}, \ldots, \lambda_{n,x'}$
denote the eigenvalues of $A|_{x'}$, 
in (weakly) increasing order.
For each~$i$, the eigenvalue $\lambda_{i,x'}$
is continuous, but perhaps not smooth, as a function of $x'$.

Our assumptions imply that for every $x \in C$
there exists a neighbourhood $U_x$ and an integer $k_x$ 
(namely, the codimension of $N_x$)
such that, for every $x' \in U_x \cap C$, 
the automorphism $A$ of $T_{x'}W$ has exactly~$k_x$ negative eigenvalues.

Let $C_k$ be the subset of $C$ where $k_x = k$.  
It is closed in $W$.
There exists a neighbourhood $U_k$ of $C_k$ in $W$ 
such that for all $x' \in U_k$
the eigenvalue $\lambda_{k+1,x'}$ is strictly greater
than the eigenvalues $\lambda_{1,x'}, \ldots, \lambda_{k,x'}$.
Shrink the sets $U_k$ so that their closures become disjoint.
For each $x' \in U_k$, let $E_{x'}$ be the sum of the eigenspaces
of $A|_{x'} \colon\thinspace T_{x'}W \to T_{x'}W$
that correspond to the eigenvalues $\lambda_{1,x'}, \ldots, \lambda_{k,x'}$.
Let $U = \bigcup_k U_k$. Then $E$ is a smooth subbundle of $TW|_U$,
and at each point $x$ of $C$, \ $E_x$ is a maximal subspace of~$T_xW$
on which $\Hess f|_x$ is negative definite.

Finally, suppose that the Hessian is $J$--invariant.
By averaging, we can arrange the tensor $B$ 
and the Riemannian metric $\left< \cdot , \cdot \right>$ 
to be $J$--invariant as well. The automorphism $A \colon\thinspace TW \to TW$
is then complex linear,
so its eigenbundles are $J$--invariant, and $E$ is a complex,
and hence orientable, vector bundle.
\end{proof}

Let $h \colon\thinspace U \to E|_U$ be a smooth section of a vector bundle $E$,
and let $x \in U$ be a point where this section vanishes.
The \textbf{vertical differential} of $h$ at $x$,
$$ d_Vh \colon T_x W \to E_x, $$
is the composition of the differential
$dh|_x \colon\thinspace T_xW \to T_{(x,0)} E$ 
with the projection to the second factor in the decomposition
\begin{eqnarray*}
T_{(x,0)} E  & \cong & T_{(x,0)} \big(\text{the zero section of $E$}\big) 
\oplus T_{(x,0)} \big(\text{the fiber $E_x$ of $E$}\big)\\
& \cong & T_xW \oplus E_x .
\end{eqnarray*}

In the second lemma, we find an appropriate section of the bundle $E$
of Lemma~\ref{extend Hessian}
whose zero-set will give us the desired submanifold $N$.

\begin{Lemma} \labell{exists section}
Let $U$ be a neighbourhood of $C$ and $E$ a subbundle of $TW|_U$,
such that at each point $x$ of $C$
the subspace $E_x$ of $T_xW$ is maximal
among subspaces on which $\Hess f|_x$ is negative definite. 

Then, after possibly shrinking $U$ to a smaller neighbourhood of $C$, 
there exists a smooth section $h \colon\thinspace U \to E|_U$
that vanishes on $C$
and such that at each critical point $x \in C$
the restriction to $E_x$ of the vertical differential of $h$
is the identity map on~$E_x$, 
and $\Hess f|_x$ is positive semidefinite on $\ker(d_Vh|_x)$.
\end{Lemma}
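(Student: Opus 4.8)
The plan is to construct $h$ by patching together local sections with a partition of unity. This works because the two required properties of $h$ --- vanishing on $C$, and having vertical differential that restricts to the identity on $E_x$ at each $x \in C$ --- are preserved under such patching: at a point of $C$ every local section involved already vanishes, so the terms of the Leibniz rule that differentiate the partition of unity drop out, and the partition of unity sums to $1$, so the vertical differentials average to the identity.

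For the local construction near a point $x_0$ of $C$, I would use the local submanifolds $N_x$ posited at the start of this section: there are a neighbourhood $U_{x_0}$ of $x_0$ and a submanifold $N_{x_0} \subseteq U_{x_0}$ with $C \cap U_{x_0} \subseteq N_{x_0}$ such that, for each $x' \in C \cap U_{x_0}$, the space $T_{x'}N_{x_0}$ is maximal among subspaces of $T_{x'}M$ on which $\Hess f|_{x'}$ is positive semidefinite. For such $x'$ the space $E_{x'}$ has dimension equal to the index of $\Hess f|_{x'}$, and $T_{x'}N_{x_0}$ has the complementary dimension; since a nonzero vector in $E_{x'} \cap T_{x'}N_{x_0}$ would be simultaneously negative and nonnegative for $\Hess f|_{x'}$, it follows that $T_{x'}M = E_{x'} \oplus T_{x'}N_{x_0}$. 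After shrinking $U_{x_0}$ and $N_{x_0}$ we may assume this splitting holds at every point of $N_{x_0}$; then $E|_{N_{x_0}}$ is a complement to $TN_{x_0}$ in $TM|_{N_{x_0}}$, and is therefore canonically identified with the normal bundle of $N_{x_0}$ in $M$. Now fix a Riemannian metric and a tubular neighbourhood $\pi_{x_0} \colon\thinspace U_{x_0} \to N_{x_0}$ whose fibre direction at each $x' \in N_{x_0}$ is $E_{x'}$ --- for instance the one given near the zero section by $(x',e') \mapsto \exp_{x'}(e')$ --- and set $h_{x_0}(y) := P_y\bigl(-\exp_y^{-1}(\pi_{x_0}(y))\bigr)$ for $y$ in $U_{x_0}$ (small enough that the right-hand side makes sense), where $P_y \colon\thinspace T_yM \to E_y$ is the orthogonal projection. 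Then $h_{x_0}$ is a smooth section of $E|_{U_{x_0}}$ that vanishes on $N_{x_0}$, since there $\pi_{x_0}(y) = y$; and differentiating $h_{x_0}$ along the curves $t \mapsto \exp_{x'}(tv)$ with $v \in E_{x'}$ shows that its vertical differential at each point $x'$ of $N_{x_0}$ restricts to the identity on $E_{x'}$.

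To finish, cover a neighbourhood of $C$ by a locally finite family of such neighbourhoods $U_{x_i}$, choose a partition of unity $\{\rho_i\}$ with $\supp \rho_i \subset U_{x_i}$ and $\sum_i \rho_i \equiv 1$ near $C$, and set $h := \sum_i \rho_i h_{x_i}$. If $x \in C$, then for every $i$ with $\rho_i$ not identically zero near $x$ we have $x \in C \cap U_{x_i} \subseteq N_{x_i}$, so $h_{x_i}(x) = 0$; hence $h(x) = 0$. Because all of these $h_{x_i}$, as well as $h$ itself, vanish at $x$, the vertical differential of $h$ at $x$ equals $\sum_i \rho_i(x)$ times the vertical differential of $h_{x_i}$ at $x$, which on $E_x$ gives $\sum_i \rho_i(x)\,\mathrm{id}_{E_x} = \mathrm{id}_{E_x}$. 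Shrinking $U$ to the neighbourhood of $C$ on which $h$ is defined completes the proof. I expect the real work to lie in the local construction --- using the maximality of $E$ to see that $E|_{N_{x_0}}$ is a genuine complement to $TN_{x_0}$, arranging a tubular neighbourhood whose normal directions are exactly the subspaces $E_{x'}$, and tracking the vertical differential through the tubular-neighbourhood identification and the projection $P$ --- whereas the patching step is essentially formal.
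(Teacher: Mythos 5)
Your proposal is correct and follows essentially the same strategy as the paper: build local sections that vanish on the local submanifolds $N_x$ and have identity vertical differential on $E$, then patch with a partition of unity, using that the $d\rho_i$ terms drop out because every $h_{x_i}$ vanishes on $C$. The only difference is cosmetic and lies in the local step --- the paper writes $N_x$ as a coordinate slice $\{\varphi_1=\dots=\varphi_k=0\}$ and uses $(d\varphi_1,\dots,d\varphi_k)$ to trivialize $E$, whereas you use a Riemannian tubular neighbourhood and orthogonal projection onto $E$ --- and both versions are valid.
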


\begin{proof}
Recall that, for each $x \in C$, 
 \ $U_x$ is a neighbourhood of $x$
and $N_x \subset U_x$ is an infinitesimally minimizing manifold 
for $f$ along $C \cap U_x$.

Let $x \in C$.   Let
$$ \varphi \colon\thinspace U'_x \to \R^n $$
be a coordinate chart on a neighbourhood $U'_x$ of $x$ in $U_x \cap U$
in which the submanifold $N_x$ is given by the equations
$\varphi_1 = \ldots \varphi_k = 0$.
Because $T_x N_x$ is complementary to~$E_x$,
after possibly shrinking $U'_x$,
the differentials of $\varphi_1,\ldots,\varphi_k$
give a trivialization of $E|_{U'_x}$:
$$ E|_{U'_x} \to \R^k .$$
Let 
$$ h_x \colon\thinspace U'_x \to E|_{U'_x} $$
be the section whose composition with the trivialization $ E|_{U'_x} \to \R^k $
is the map $(\varphi_1,\ldots,\varphi_k)$.
Then $h_x$ vanishes on $C \cap U'_x$, 
and, at each $x' \in C \cap U'_x$, the restriction to $E_{x'}$ 
of the vertical differential $d_Vh_x|_{x'}$ is the identity map on $E_{x'}$, 
and $\Hess f|_{x'}$ is positive semidefinite on $\ker(d_Vh_x|_{x'})$.

Let $U' = \bigcup_{x \in C} U'_x$.
Define a section $h \colon\thinspace U' \to E$ by
$$ h = \sum_\alpha \rho_\alpha h_{x_\alpha} $$
where $\{ \rho_\alpha \colon\thinspace U' \to \R \}$
is a partition of unity with $\supp \rho_\alpha \subset U'_{x_\alpha}$.
Then $h$ satisfies the required properties.

Indeed, let $x' \in C$.  
Then $h(x') = \sum_\alpha \rho_\alpha h_{x_\alpha} (x')$.
Since $h_{x_\alpha} (x') = 0$ for each $\alpha$,
we get that $h(x') = 0$.
Choosing a local trivialization of $E$ near $x'$,
and identifying sections of $E$ with $\R^k$ valued functions,
for every $v \in T_{x'}W$ we have
$$ dh|_{x'} (v) = \sum_\alpha 
 \rho_\alpha d h_{x_\alpha}|_{x'} (v)
 + h_{x_\alpha} (x') d \rho_\alpha|_{x'} (v).$$
Because $h_{x_\alpha}(x')=0$ for every $\alpha$, we get 
$d_Vh|_{x'}  = \sum_{\alpha}\rho_\alpha d_Vh_{x_\alpha}|_{x'}.$
The required properties of $d_Vh|_{x'}$ follow from the analogous properties 
of $d_Vh_{x_\alpha}|_{x'}$.  (This uses the following linear algebra fact. 
Let $V$ be a vector space, $B$ a symmetric bilinear form on $V$, 
and $E$ a subspace of $V$ on which $B$ is negative definite. 
For every $\alpha$, let $H_\alpha \colon V \to E$ be a projection map 
such that $B$ is positive semidefinite on $\ker H_\alpha$.
Let $H = \sum_\alpha \rho_\alpha H_\alpha$, 
where $\rho_\alpha$ are non-negative real numbers with sum $1$.
Then $H \colon V \to E$ is a projection map 
such that $B$ is positive semidefinite on $\ker H$.)
\end{proof}

Finally, we use the zero-set of the section found in Lemma~\ref{exists section}
to obtain an infinitesimally minimizing submanifold $N$ for $f$ along $C$.

\begin{Lemma} \labell{exists inf minimizing}
Let $U$ be a neighbourhood of $C$ and $E$ a subbundle of $TW|_U$,
such that at each point $x$ of $C$
the subspace $E_x$ of $T_xW$ is maximal
among subspaces on which $\Hess f|_x$ is negative definite.

Let $h \colon\thinspace U \to E|_U$ be a smooth section that vanishes on $C$
and such that at each critical point $x \in C$
the restriction to $E_x$ of the vertical differential of $h$
is the identity map on~$E_x$
and $\Hess f|_{x}$ is positive semidefinite on $\ker (d_Vh|_x)$.

Then, after possibly shrinking the neighbourhood $U$ of~$C$, 
the set $N = h\inv(0)$
is an infinitesimally minimizing submanifold for $f$ along $C$.

If in addition, $E$ is an orientable vector bundle, 
then $N$ is co-orientable.
\end{Lemma}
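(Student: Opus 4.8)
The plan is to verify, at the points of $C$, that the vertical differential of $h$ is an isomorphism onto $E_x$, and then invoke the implicit function theorem to conclude that $h^{-1}(0)$ is a submanifold near $C$. First I would observe that, since $E$ is a \emph{subbundle} of $TM|_U$, at each $x \in C$ we have a decomposition $T_xM = T_x(h^{-1}(0)$-candidate$)\oplus E_x$ only once we know the rank statement; so the first step is purely local at each $x\in C$: the full differential $dh|_x \colon T_xM \to T_{(x,0)}E \cong T_xM \oplus E_x$ has, by hypothesis, the property that its $E_x$-component (the vertical differential) restricts to the identity on $E_x\subset T_xM$. Hence the vertical differential $T_xM \to E_x$ is surjective, so $0$ is a regular value of $h$ at every point of $C$. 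The set of points where a smooth section is transverse to the zero section is open, so after shrinking $U$ to a smaller neighbourhood of the closed set $C$ (this is where closedness of $C$ in $M$, and the fact that $C$ has a neighbourhood meeting $\Crit f$ only in $C$, is used) we may assume $h$ is everywhere transverse to the zero section on $U$; then $N_C = h^{-1}(0)$ is a submanifold of $U$ of codimension $k = \operatorname{rank} E$, and $T_xN_C = \ker(\text{vertical differential of }h \text{ at }x)$ for $x\in C$.

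Next I would identify $T_xN_C$ for $x\in C$. Since the vertical differential at $x$ restricts to the identity on $E_x$, its kernel is a complement to $E_x$ in $T_xM$; but I want to show it is \emph{the} right complement, namely one on which $\Hess f|_x$ is positive semidefinite and which is maximal such. For this I would use the linear-algebra fact underlying Remark~\ref{one prime}: if $E_x$ is maximal among subspaces on which $\Hess f|_x$ is negative definite, then any complement $V_x$ to $E_x$ in $T_xM$ has the property that $\Hess f|_x$ is positive semidefinite on $V_x$ \emph{provided} $V_x$ is $\Hess f|_x$-orthogonal to $E_x$, and conversely a maximal positive-semidefinite subspace is exactly such a complement. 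The subtle point is that an arbitrary complement need not work. So here is where I would need more than transversality: I should go back and check that the section $h$ produced in Lemma~\ref{exists section} has kernel-of-vertical-differential equal to $T_xN_x$ at each $x\in C$ — indeed it does, because $h$ was built from the coordinate functions $\varphi_1,\dots,\varphi_k$ cutting out $N_x$, so locally near such $x$ the zero set agrees to first order with $N_x$, whence $T_xN_C = T_xN_x$, which is by hypothesis maximal among subspaces on which $\Hess f|_x$ is positive semidefinite. (One must be slightly careful that this holds at \emph{every} point of $C$, not just at the center of each chart; but the conclusion of Lemma~\ref{exists section}, that the vertical differential restricts to the identity on $E_{x'}$ for every $x'\in C\cap U$, together with the local argument, gives $T_{x'}N_C = $ a complement of $E_{x'}$ that is $\Hess f|_{x'}$-orthogonal to it, hence maximal positive semidefinite.)

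For the co-orientability statement: a tubular neighbourhood of $N_C$ in $U$ identifies the normal bundle of $N_C$ with the restriction of $E$ to $N_C$ — more precisely, the vertical differential of $h$ along $N_C$ gives a bundle map $TM|_{N_C}/TN_C \to E|_{N_C}$ which is an isomorphism at points of $C$, hence (after shrinking $U$ once more, using that $C$ is closed) an isomorphism on all of $N_C$. Thus if $E$ is orientable, so is the normal bundle of $N_C$ in $M$, i.e.\ $N_C$ is co-orientable.

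The main obstacle I anticipate is the second step: transversality of $h$ alone only yields \emph{some} complement of $E_x$ as the tangent space, and a generic complement of a maximal negative-definite subspace is \emph{not} maximal positive-semidefinite. The fix is to exploit the specific construction of $h$ in Lemma~\ref{exists section} — the fact that each local piece $h_x$ cuts out the genuinely minimizing submanifold $N_x$ to first order at all nearby points of $C$ — rather than treating $h$ as a black-box transverse section; once that first-order agreement $T_{x'}N_C = T_{x'}N_x$ is established for all $x'\in C$, the maximality is immediate from the hypothesis on the $N_x$. The routine parts — the implicit function theorem, openness of transversality, shrinking to neighbourhoods of the closed set $C$, and the normal-bundle identification — I would handle briefly.
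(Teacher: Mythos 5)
Your skeleton matches the paper's proof: the implicit function theorem makes $N_C=h^{-1}(0)$ a submanifold near the closed set $C$ with $T_xN_C=\ker(\text{vertical differential of }h)$, and the vertical differential identifies the normal bundle of $N_C$ with $E|_{N_C}$, which gives co-orientability when $E$ is orientable. Your worry about the middle step is exactly right and is the genuine crux: an arbitrary complement of a maximal negative definite subspace need \emph{not} be positive semidefinite (for $B=\mathrm{diag}(1,-1)$ on $\R^2$, the line spanned by $e_1+2e_2$ is a complement of the maximal negative definite line $\R e_2$ on which $B$ is negative). In fact the hypotheses of the lemma as stated do not by themselves force the conclusion: for $f(u,v)=u^2-v^2$, $C=\{0\}$, $E=\R\,\partial/\partial v$ and $h(u,v)=(2u+v)\,\partial/\partial v$, all hypotheses hold yet $h^{-1}(0)$ is the line $v=-2u$, on which the Hessian is negative definite. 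So one really must use the specific $h$ built in Lemma~\ref{exists section}, as you propose; the paper's own proof elides this by asserting that \emph{every} complement of $E_x$ is maximal positive semidefinite.

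However, your repair does not close the gap. At a point $x'$ lying in several charts $U_{x_\alpha}$, the vertical differential of $h=\sum_\alpha\rho_\alpha h_{x_\alpha}$ is $\sum_\alpha\rho_\alpha(x')\,dh_{x_\alpha}|_{x'}$, whose kernel is in general equal to \emph{no single} $T_{x'}N_{x_\alpha}$; so ``$T_{x'}N_C=T_{x'}N_{x}$'' fails off the centres of the charts. Your fallback, that $T_{x'}N_C$ is $\Hess f|_{x'}$--orthogonal to $E_{x'}$, is not established by the construction and is false in general; your ``conversely'' is also wrong, since a maximal positive semidefinite complement need not be $\Hess f$--orthogonal to $E_{x'}$ (the span of $e_1+\tfrac12 e_2$ in the $2\times 2$ example). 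What does work is a convexity argument. Fix a reference complement $K_0$ of $E_{x'}$ and write each complement as the graph $K_A=\{v+Av:\ v\in K_0\}$ of a linear map $A\colon K_0\to E_{x'}$. Each $dh_{x_\alpha}|_{x'}$ is the projection onto $E_{x'}$ along $K_{A_\alpha}=T_{x'}N_{x_\alpha}$, and such projections combine affinely: $\sum_\alpha\rho_\alpha(x')P_{A_\alpha}=P_{\bar A}$ with $\bar A=\sum_\alpha\rho_\alpha(x')A_\alpha$. For each fixed $v\in K_0$ the function $A\mapsto \Hess f|_{x'}(v+Av,\,v+Av)$ is concave, because its quadratic part $\Hess f|_{x'}(Av,Av)$ is $\le 0$ ($Av$ lies in $E_{x'}$); hence the set of $A$ for which $\Hess f|_{x'}$ is positive semidefinite on $K_A$ is convex, contains every $A_\alpha$ by the hypothesis on the $N_{x_\alpha}$, and therefore contains $\bar A$. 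Thus $T_{x'}N_C=K_{\bar A}$ is positive semidefinite of dimension complementary to that of $E_{x'}$, and any strictly larger subspace meets $E_{x'}$ nontrivially, so $T_{x'}N_C$ is maximal. With this substitution your outline is correct; the transversality and co-orientation steps are fine as you wrote them.
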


\begin{proof}
Fix a point $x \in C$,   
and trivialize $E$ in a neighbourhood $U_x$ of $x$.
In terms of this trivialization, $h$ becomes a map from $U_x$ to $\R^k$. 
The hypotheses guarantee that 
the differential of $h$ at $x$ is onto.
By the implicit function theorem, after possibly shrinking $U_x$, 
the intersection $U_x \cap h\inv(0)$ is a submanifold of $U_x$.
So we have shown that every point in $C$ has a neighbourhood $U_x$ 
whose intersection with $N = h\inv(0)$ is a submanifold of $U_x$. 
After possibly shrinking $U$,
we obtain that $N$ itself is a submanifold of~$U$.

Let $x\in C$.  By the implicit function theorem, $T_xN = \ker (d_Vh|_x)$.
Because $\mathrm{Hess} f|_{x}$ is positive semidefinite on this space and 
negative definite on $E_x$, and because these spaces are complementary,
$T_xN$ is maximal among subspaces of $T_xW$ on which 
$\mathrm{Hess} f|_x$ is positive semidefinite.

Finally, we note that if $E$ is an orientable vector bundle, 
then $N$ is co-orientable.
Indeed, we have constructed $N$ so that its normal bundle 
is the pullback of $E$,
and co-orientability is precisely orientability of the normal bundle.
\end{proof}

This completes the proof of Proposition~\ref{step1}.  

% ------------------------------------------
\section{Compatibly fibrating neighbourhood}
\labell{sec:compatibly fibred nd}
% ------------------------------------------

The purpose of this section is to prove Proposition~\ref{step2}.

Let $W$ be a smooth manifold and $f \colon\thinspace W \to \R$
a smooth function.  Assume that $f$ is constant on the set $C := \Crit(f)$
of its critical points.  
Let $N$ be an infinitesimally minimizing manifold for $f$ along $C$.
To prove the proposition, 
we need to find a compatibly fibred neighbourhood for $C$,
which is fibrewise orientable if $N$ is co-orientable.

Clearly, for every point $x$ of $C$,
there exists an open neighbourhood $U_x$ of $x$
and an infinitesimally minimizing manifold $N_x \subset U_x$
for $f$ along $C \cap U_x$: take $U_x = W$ and $N_x = N$.
By~Lemma~\ref{extend Hessian},
we conclude that there exists a neighbourhood $U$ of~$C$
and a sub-bundle $E$ of $TW|_U$
such that at each point $x$ of $C$ 
the subspace $E_x$ of $T_xW$ is maximal
among subspaces on which the Hessian of $f$ is negative definite.

The exponential map with respect to any Riemannian metric
identifies a disc sub-bundle of $E|_{N \cap U}$ 
with a (tubular) neighbourhood $U'$ of $N \cap U$ in $U$.
Let $\pi \colon U' \to N \cap U$ be the corresponding projection map.
For each $x \in C$, the tangent to the fibre of $\pi$ at $x$ is $E_x$,
which is maximal among subspaces of $T_xW$ on which the Hessian of $f$
is negative definite.
So $\pi \colon U' \to N \cap U$ is a compatibly fibred neighbourhood of $C$. 

Here, $N \cap U$ serves two roles: it is a submanifold of $W$ that contains~$C$,
and it is the base of the fibration $\pi$.  
To match the notation of Definition~\ref{compatibly fibred},
we use a different symbol for the base of the fibration.
Namely, we write $\calO$ instead of $N \cap U$ for the base of the fibration,
so that the tubular neighbourhood map 
becomes a map $\pi \colon\thinspace U' \to \calO$,
and this map is a compatibly fibred neighbourhood of $C$.

Finally, we consider the case when $N$ is co-orientable.  
An orientation on the fibres of the normal bundle of $N$ in $W$
gives an orientation on the fibres
of the compatibly fibred neighbourhood that we have constructed.
Thus, the compatibly fibred neighbourhood is fibrewise orientable, 
as desired.

This completes the proof of Proposition~\ref{step2}.

% --------------------------------------
\section{Minimal degeneracy}
\labell{sec:proof-minimal}
% --------------------------------------

The purpose of this section is to prove Proposition~\ref{step3}.

Let $W$ be a smooth manifold and $f \colon\thinspace W \to \R$
a smooth function.  Assume that $f$ is constant on the set $C:=\Crit(f)$
of its critical points.
Let $\pi \colon\thinspace U \to \calO$ be a compatibly fibred
neighbourhood of $C$.  Namely, $U$ is a neighbourhood of $C$ in $W$,
\ $\pi$ is a submersion, and at every point $x$ in $C$
the vertical tangent space at $x$ is maximal
among subspaces of $T_xW$ on which the Hessian of $f$ 
is negative definite.
Let $\tN$ denote the set of fibrewise critical points of $\pi$, that is, 
the points whose vertical tangent space is in the kernel of $df$.

\begin{Lemma} \labell{local properties}
For every point $x$ of $C$ there exists a neighbourhood $W_x$
such that the intersection $N_{W_x} := W_x \cap \tN$
has the following properties.
\begin{itemize}
\item $N_{W_x}$ is a manifold, containing $x$.
\item $T_x N_{W_x}$ is maximal among subspaces of $T_xW$
on which the Hessian of $f$ is positive semidefinite.
\end{itemize}
\end{Lemma}

\begin{proof}
Because every critical point is fibrewise critical, $x$ is in $\tN$.
Because $\pi$ is a submersion, without loss of generality
we may identify a neighbourhood of $x$ in $W$
with a neighbourhood of the origin in $\R^a \times \R^b$,
such that $x$ becomes the origin, and such that $\pi$ becomes
the projection map

$$ \pi (\xi_1 , \ldots , \xi_a , \eta_1,\ldots,\eta_b )
   = (\xi_1 , \ldots , \xi_a) .$$
The vertical differential of $f$ then becomes the function
$$ d_V f \colon\thinspace \R^a \times \R^b \to \R^b $$
that is given by
$$ \left( \frac{\del f}{\del \eta_1} , \ldots , 
          \frac{\del f}{\del \eta_b} \right) .$$
The set $\tN$ of fibrewise critical points is precisely the zero set of $d_V f$.
By the implicit function theorem, to show that $\tN$ is a manifold 
near $x$, it is enough to show that the differential 
of $d_Vf$ at the origin,
\begin{equation} \labell{differential of dVf}
 d (d_V f)|_0 \colon\thinspace \R^a \times \R^b \to \R^b,
\end{equation}
is onto.
In coordinates, the linear map~\eqref{differential of dVf}
is represented by the $(a+b) \times b$ matrix
$$
\left( 
\begin{array}{cccccc}
\frac{\del^2 f}{ \del \xi_1 \del \eta_1} \Big|_0 & \ldots &
\frac{\del^2 f}{ \del \xi_a \del \eta_1} \Big|_0 & 
\frac{\del^2 f}{ \del^2 \eta_1} \Big|_0 & \ldots &
\frac{\del^2 f}{ \del \eta_b \del \eta_1} \Big|_0 \\
\vdots & & \vdots & \vdots & & \vdots \\
\frac{\del^2 f}{ \del \xi_1 \del \eta_b} \Big|_0 & \ldots &
\frac{\del^2 f}{ \del \xi_a \del \eta_b} \Big|_0 & 
\frac{\del^2 f}{ \del \eta_1 \del \eta_b} \Big|_0 & \ldots &
\frac{\del^2 f}{ \del^2 \eta_b } \Big|_0 
\end{array}
\right) .
$$
The fact that the right $b \times b$ block of this matrix is negative definite,
hence non-degenerate, implies that~\eqref{differential of dVf} is onto,
as required.

This fact also implies that the kernel of~\eqref{differential of dVf} 
is a complementary subspace to $\{ 0 \} \times \R^b$
in $\R^a \times \R^b$,
thus, that $T_xN_{W_x}$ is a complementary subspace to $\ker d\pi|_x$
in $T_xW$.
After a further change of coordinates,
we can arrange that $T_x N_{W_x}$ is represented
by the subspace $\R^{a} \times \{ 0 \}$ of $\R^a \times \R^b$.

In these new coordinates, because $\frac{\del f}{\del \eta_j}$ vanishes
along $\tN$ and $\frac{\del}{\del \xi_i}$ is tangent to $\tN$ at~$x$,
we get that $\left.\frac{\del^2 f}{\del \xi_i \del \eta_j}\right|_0 = 0$
for all $1 \leq i \leq a$ and $1 \leq j \leq b$.
So, in these coordinates, the Hessian of $f$ at $x$
becomes the bilinear form that is represented by the block diagonal matrix
$$ \left(
\begin{array}{cc}
\left.\frac{\del^2 f}{\del\xi_i\del\xi_{i'}}\right|_0 & 0 \\
 0 & \left.\frac{\del^2 f}{\del \eta_j \del \eta_{j'}}\right|_0
\end{array}
\right).
$$
By assumption, $\{ 0 \} \times \R^b$ is maximal among subspaces of $\R^{a+b}$
on which this matrix is negative definite.
Because the matrix is block diagonal, it follows that $\R^a \times \{ 0 \} $
is maximal among subspaces of $\R^a \times \R^b$ on which
this matrix is positive semidefinite.  That is, $T_x N_{W_x}$
is maximal among subspaces of $T_xW$ on which the Hessian of $f$
is positive semidefinite, as required.
\end{proof}

Following the notation of Lemma~\ref{local properties},
we set $U' := \bigcup W_x$.  Then $U'$ is an open neighbourhood of $C$,
the map $\pi' := \pi|_{U'} \colon\thinspace U' \to \calO$ is a submersion,
and $\tN' := \tN \cap U'$
is the set of fibrewise critical points of~$\pi'$.

The sets $N_{W_x}$ form an open covering of $\tN'$.
By Lemma~\ref{local properties}, 
each of them is a manifold. We deduce that $\tN'$ is a manifold.
Moreover, because every critical point is fibrewise critical,
$N_{W_x}$ contains $W_x \cap C$; it follows that $\tN'$ contains $C$.
For each $x \in C$, by Lemma~\ref{local properties},
and since $T_x\tN' = T_xN_{W_x}$, we have that $T_x\tN'$ is maximal
among subspaces of $T_xW$ on which the Hessian of $f$ is positive
semidefinite.
Moreover, when the compatibly fibred neighbourhood is 
fibrewise orientable,
the vertical tangent bundle $\ker d\pi|_{\tN'}$ is by definition orientable.
But this bundle is isomorphic to 
the normal bundle to $\tN'$ in $W$, and
so $\tN'$ is co-orientable.
This completes the proof of part (a) of Proposition~\ref{step3}.

Since the vertical tangent bundle $\ker d\pi |_{\tN'}$
is complementary to $T\tN'$ in $TW|_{\tN'}$,
there exists an open neighbourhood $U_{\tN'}$ of $\tN'$ in $U'$
and a tubular neighbourhood map
$$
 \pi_{\tN'} \colon\thinspace U_{\tN'} \to \tN' 
$$
whose fibres are open subsets of the fibres of $\pi$.

More precisely, 
let $E$ be the pullback to $\tN'$ of the vertical tangent bundle,
that is, for each $x' \in \tN'$, the fibre of $E$ at $x'$
is $\ker d\pi|_{x'}$.
Choose a fibrewise Riemannian metric on $U'$.
Then the fibrewise exponential map gives a diffeomorphism
from a neighbourhood of the zero section in $E$
to a neighbourhood $U_{\tN'}$ of $\tN'$ in $W$
that carries the projection map to $\pi_{\tN'}$
when we identify $\tN'$ with the zero section.

With the identification of~$U_{\tN'}$ with an open subset of a vector bundle,
the fibrewise second derivative of $f$ becomes well defined on~$U_{\tN'}$. 
The set of points where this fibrewise second derivative of $f$ 
is negative definite
is an open neighbourbood of~$\tN'$ in~$U_{\tN'}$.
This, and the fact that the set of fibrewise critical points
is exactly~$\tN'$, together imply 
that, after possibly shrinking~$U_{\tN'}$ to a smaller neighbourhood of~$\tN'$,
the fibrewise maxima of $f$ are achieved exactly at the points of~$\tN'$.

Abusing notation by returning to our previous symbols,
we now assume that we have the following set--up:
\ $\tN$ is a submanifold of $W$ that contains $C$;
\ $U$ is an open neighbourhood of $\tN$;
\ $\pi \colon\thinspace U \to \tN$ is a tubular neighbourhood map; 
\ the set of fibrewise critical points of $\pi$ is exactly $\tN$; 
\ the fibrewise maxima of $f$ are achieved exactly at the points of $\tN$;
\ and, at each point $x$ of $C$, we have that $\ker d\pi|_x$
is maximal among subspaces of $T_xW$ on which the Hessian of $f$
is negative definite.

\medskip

Now we want to show that $f|_{\tN}$ attains its minimum value exactly 
at the points of~$C$, 
after possibly intersecting with a smaller neighbourhood of $C$.
For every point $x$ of $C$, let $U_x$ be an open neighbourhood
of $x$ and $Z_x \subset U_x$ a minimizing submanifold 
for the function $f|_{U_x} \colon\thinspace U_x \to \R$
along the closed subset~$C \cap U_x$.
Since $\ker d\pi|_x$ is maximal among subspaces of $T_xW$
on which the Hessian of $f$ is negative definite
and $T_x Z_x$ is maximal among subspaces of $T_xW$ 
on which the Hessian of $f$ is positive semidefinite,
$Z_x$ is transverse to the fibres of $\pi|_{U_x}$ at the point $x$.
Therefore, $\pi|_{Z_x} \colon\thinspace Z_x \to \tN$ is a submersion
at the point $x \in Z_x$, so $\pi(Z_x)$ contains a neighbourhood
of $x$ in $\tN$.
So $\bigcup_x \pi(Z_x)$ contains a neighbourhood of $C$ in $\tN$.
Let $\calO'$ be such a neighbourhood.
By the choice of $Z_x$, the restriction $f|_{Z_x}$ attains
its minimum exactly on $Z_x \cap C$.
Thus, for every $y \in Z_x \ssminus C$, we have $f(y) > f(x)$.
But we have also arranged that the fibrewise maxima of $f$
are attained exactly on $\tN$.
So, for every $y \in Z_x$, we have $f(y) \leq f(\pi(y))$.
We conclude that, for every $y \in Z_x$,
we have $f(\pi(y)) \geq f(x)$, and equality implies that $y \in C$,
which implies that $\pi(y) = y$ and hence that $\pi(y) \in C$.
It follows that $f|_{\pi(Z_x)}$ attains its minimum exactly on $Z_x \cap C$.
Hence, $f|_{\calO'}$ attains its minimum exactly on $C$.
So $\calO'$ is a minimizing manifold for $f$ along~$C$, as required.

This completes the proof of Proposition~\ref{step3}.

% ---------------------------------------
\section{Norm-square of the momentum map}
\labell{sec:norm-square}
% ---------------------------------------

Let $\Phi \colon\thinspace M \to \g^*$ be the momentum map
for the action of a compact Lie group $G$ on a symplectic manifold $(M,\omega)$.
Fix an $\Ad$--invariant inner product on~$\g$, and fix the induced
inner product on $\g^*$.   

\begin{Theorem} \labell{norm square}
The function $\| \Phi \|^2 \colon\thinspace M \to \R$ is minimally degenerate.
\end{Theorem}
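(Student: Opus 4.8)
The plan is to deduce Theorem~\ref{norm square} from Theorem~\ref{main} (the locality result) together with the local normal form theorem for Hamiltonian group actions. By Theorem~\ref{main}, it suffices to show that every point $p \in M$ has an open neighbourhood $U$ such that $\|\Phi\|^2|_U$ is minimally degenerate. Points that are not critical for $\|\Phi\|^2$ are handled trivially: near such a point we may take $U$ to contain no critical points, so the critical set is empty and the minimal degeneracy condition is vacuous. Thus the real content is at a critical point $p$ of $\|\Phi\|^2$.

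First I would recall the structure of the critical set of the norm-square. Writing $\Phi(p) = \beta \in \g^*$, a standard computation (essentially Kirwan's) shows that $p$ is critical for $\|\Phi\|^2$ if and only if the vector field generated by $\beta$ (via the inner product identification $\g^* \cong \g$) vanishes at $p$, i.e. $p$ is fixed by the subtorus $T_\beta$ generated by $\beta$. Near such a $p$, I would invoke the local normal form / symplectic slice theorem: a $G$--invariant neighbourhood of the orbit $G\cdot p$ is equivariantly symplectomorphic to a model $Y = G \times_H (\mathfrak{m}^* \times V)$, where $H = G_p$ is the stabilizer, $\mathfrak{m}$ is an $H$--invariant complement to $\mathfrak{h}$ in $\g$, and $V$ is the symplectic slice representation of $H$; the momentum map on this model has an explicit formula. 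Because $\|\Phi\|^2$ is $G$--invariant, it descends to a function that, after the standard reduction, we need only analyze on the slice direction.

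The key step is then to check that in this explicit local model the function $\|\Phi\|^2$ is minimally degenerate. Here I would exploit the almost complex structure: $V$ is a complex (unitary) representation of $H$, and more generally $M$ carries a compatible almost complex structure $J$ for which $\omega(\cdot, J\cdot)$ is a Riemannian metric; one checks that at a critical point of $\|\Phi\|^2$ the Hessian is $J$--invariant. This is precisely the hypothesis of the second part of Theorem~\ref{main}, which will additionally give co-orientability of the minimizing submanifolds — the property Kirwan requires. So the task reduces to: exhibit, in the local model near $p$, a closed subset $C$ of the critical set (namely an appropriate piece on which $\|\Phi\|^2$ is constant) together with a minimizing submanifold, and verify Conditions~(1) and~(2) of Definition~\ref{def:minimizing}. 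In the model, the relevant critical set is cut out by the vanishing of finitely many coordinate-type functions coming from weight space decompositions of $V$ under $T_\beta$, and the candidate minimizing submanifold $N$ is the sum of the zero weight space and the weight spaces on which $\beta$ pairs non-negatively; one computes the Hessian of $\|\Phi\|^2$ in these coordinates and checks directly that it is positive semidefinite on $T_xN$ and negative definite on a complement, and that $\|\Phi\|^2|_N$ attains its minimum exactly on $C$.

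The main obstacle I anticipate is the bookkeeping in the last step: writing down the momentum map in the local normal form coordinates, decomposing the slice representation under the relevant torus into weight spaces, and carrying out the Hessian computation carefully enough to verify both the maximality condition (1) and the minimum condition (2) simultaneously, keeping track of the $\mathfrak{m}^*$ directions (orbit directions) in addition to the slice $V$. It is a finite-dimensional linear-algebra-plus-quadratic-forms calculation, but one that is easy to botch; the payoff of the present paper is precisely that, thanks to Theorem~\ref{main}, this computation only has to be done in the clean local model rather than globally, and the patching is handled once and for all by the locality theorem. One should also note that the decomposition of the critical set into a locally finite union of closed pieces on which $\|\Phi\|^2$ is constant is automatic here (it follows, as in Lemma~\ref{locally finite}, from local minimal degeneracy once we have it), so it need not be arranged by hand.
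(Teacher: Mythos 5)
Your proposal follows essentially the same route as the paper's proof: reduce to the local normal form model $G\times_H(V\times(\g_\beta/\h)^*)$ via Theorem~\ref{main}, decompose the slice $V$ into weight spaces for a maximal torus containing $T_\beta$, take $C=Q^{-1}(0)\cap\bigoplus_{\left<\mu,\beta\right>=0}V_\mu$ and $N=\bigoplus_{\left<\mu,\beta\right>\geq 0}V_\mu$ (crossed with the orbit directions), and verify Conditions~(1) and~(2) of Definition~\ref{def:minimizing} by exactly the quadratic Hessian computation you describe. The one step your outline leaves implicit that is more than bookkeeping is showing that $C$ is isolated in $\Crit\|\Phi\|^2$ near the origin (Part~(1) of the paper's claims); the paper handles this by first treating the abelian linear case and then the non-abelian linear case, where finiteness of $\Phi_T(\Crit\|\Phi_T\|^2)\cap\t^*$ together with closedness of the coadjoint orbits of the other critical values yields the required neighbourhood.
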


Theorem~\ref{norm square} was proved by Kirwan in \cite{kirwan:book}.
We will present here a slightly different proof: 
Theorem~\ref{main} reduces the problem to a local result,
and we deduce this local result
from the local normal form theorem for Hamiltonian $G$ actions.

\begin{Remark} \ 
\begin{enumerate}
\item
We recall what it means for $\Phi \colon\thinspace M \to \g^*$ to be 
a momentum map.  First, 
for every $\xi$ in the Lie algebra $\g$ of $G$, 
denoting the corresponding vector field by $\xi_M$,
we have \textbf{Hamilton's equation}:
\begin{equation} \labell{hamilton}
  d \left< \Phi , \xi \right> = \iota_{\xi_M} \omega.
\end{equation}
Second, $\Phi$ intertwines the $G$ action on $M$
with the coadjoint $G$ action on the dual $\g^*$ of the Lie algebra $\g$.

The local normal form theorem gives an explicit formula 
for the $G$--action, the symplectic form $\omega$,
and the momentum map $\Phi$, on a neighbourhood of a $G$ orbit.

\item
Kirwan's motivation for generalizing Morse(--Bott) theory
was in fact to apply such a theory to the norm-square of the momentum map,
as suggested in the work of Atiyah and Bott \cite{AB}.
In Section \ref{sec:Morse} we recall some of the consequences
of this application.

\item
Suppose that $G$ is a torus, $M$ is compact, and $M$ has a $G$ invariant
K\"ahler metric.  The compactness of $M$ implies that the gradient
flows of the components of the momentum map $\Phi$ are defined
for all times, and the integrability of the complex structure
implies that these flows commute.
In this case, Kirwan proves
in \cite{kirwan:article} that $f \circ \Phi$ is minimally degenerate
for \emph{any} convex function $f$; in particular, $\| \Phi \|^2$
is minimally degenerate.  So in this case one does not need
the full power of Kirwan's analysis in \cite{kirwan:book}
nor our local analysis here.

\item Notice that a connected component of the critical set of $\| \Phi\|^2$
need not be a manifold. For example, the circle action  $S^1\acts \C^2$
with weights $1$ and $-1$ has momentum map
\begin{eqnarray*}
\Phi: \C^2 & \to & \R\\
(z,w) & \mapsto & \frac{|z|^2}{2}-\frac{|w|^2}{2}.
\end{eqnarray*}
Note that this is a Morse--Bott function, and it has a critical value at $0\in\R$. 
The critical set for $\Phi$ is $\{ 0\}\subset\C^2$, which is a submanifold.
The norm-square $\| \Phi\|^2$ also has a critical
value at $0$, and for the norm-square, the critical set is
$$
\left\{ (z,w)\in\C^2\ \Bigg| \  \frac{|z|^2}{2}-\frac{|w|^2}{2}=0\right\},
$$
which is not a manifold.
\end{enumerate}
\end{Remark}

\smallskip

We now recall some general criteria for identifying the critical set.

\begin{Lemma}[Kirwan {\cite[$\S3$]{kirwan:book}}] \labell{criterion}
Let $\beta = \Phi(p)$.  Let $T_\beta$ be the closure in $G$ 
of the one parameter subgroup that is generated by the element  
of $\g$ that corresponds to $\beta$ by the inner product.
Let $\h$ denote the Lie algebra of the stabilizer of $p$;
let $\h^*$ be its dual, embedded in $\g^*$ by the inner product.
The following conditions are equivalent.
\begin{enumerate}
\item[(i)]
$p \in \Crit \| \Phi\|^2$.
\item[(ii)]
$\beta \perp \image d\Phi|_p$.
\item[(iii)]
$\beta \in \h^*$.
\item[(iv)]
$p$ is fixed by $T_\beta$.
\end{enumerate}
\end{Lemma}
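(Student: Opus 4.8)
The plan is to prove the chain of equivalences by establishing the implications $(i)\Leftrightarrow(ii)$, $(ii)\Leftrightarrow(iii)$, and $(iii)\Leftrightarrow(iv)$ separately, each of which rests on Hamilton's equation \eqref{hamilton} together with elementary linear algebra on the inner product space $\g^*$. First I would note that $d\|\Phi\|^2|_p(v) = 2\langle \Phi(p), d\Phi|_p(v)\rangle = 2\langle \beta, d\Phi|_p(v)\rangle$ for every $v \in T_pM$, so $p$ is a critical point of $\|\Phi\|^2$ precisely when $\beta$ is orthogonal to the image of $d\Phi|_p$; this is $(i)\Leftrightarrow(ii)$ and it is immediate. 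For $(ii)\Leftrightarrow(iii)$, the key observation is that the image of $d\Phi|_p$ and the annihilator $\h^\circ \subset \g^*$ of the stabilizer Lie algebra $\h$ are orthogonal complements of one another. Indeed, Hamilton's equation \eqref{hamilton} gives $\langle d\Phi|_p(v), \xi\rangle = \omega_p(\xi_M(p), v)$ for all $\xi \in \g$, $v \in T_pM$; hence an element $\xi \in \g$ annihilates $\image d\Phi|_p$ if and only if $\xi_M(p)$ is $\omega_p$--orthogonal to all of $T_pM$, which by nondegeneracy of $\omega$ means $\xi_M(p) = 0$, i.e.\ $\xi \in \h$. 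So the annihilator of $\image d\Phi|_p$ in $\g$ is exactly $\h$; transporting this statement through the inner product identifications of $\g$ with $\g^*$ and $\h$ with $\h^*$, we get that $\beta \perp \image d\Phi|_p$ iff $\beta \in \h^* = \h^\circ$ (viewed inside $\g^*$). This gives $(ii)\Leftrightarrow(iii)$.

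For $(iii)\Leftrightarrow(iv)$, I would argue as follows. Let $\beta^\sharp \in \g$ be the element corresponding to $\beta$ under the inner product, so that $T_\beta$ is the closed subgroup $\overline{\exp(\R\beta^\sharp)}$. The condition that $p$ is fixed by $T_\beta$ is equivalent to $\beta^\sharp \in \h$ (the Lie algebra of the stabilizer of $p$): if $\beta^\sharp \in \h$ then $\exp(t\beta^\sharp)$ fixes $p$ for all $t$, and since the stabilizer of $p$ is a closed subgroup containing $\exp(\R\beta^\sharp)$ it contains its closure $T_\beta$; conversely if $T_\beta$ fixes $p$ then in particular $\exp(t\beta^\sharp)$ does, so differentiating at $t=0$ gives $\beta^\sharp_M(p) = 0$, i.e.\ $\beta^\sharp \in \h$. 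Finally, $\beta^\sharp \in \h$ is exactly the statement $\beta \in \h^*$ under the inner product identification, since $\h^* \subset \g^*$ is by definition the image of $\h$ under $\g \xrightarrow{\sim} \g^*$. This closes the last equivalence.

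I expect the main subtlety to be bookkeeping the various inner-product identifications consistently --- keeping straight when $\beta$ is being regarded as an element of $\g^*$, when as the dual element $\beta^\sharp \in \g$, and what ``$\h^*$ embedded in $\g^*$'' means (namely, the orthogonal complement of the annihilator of $\h$, equivalently the image of $\h$ under the metric isomorphism). None of the individual steps is hard: the geometric content is entirely contained in the identity $\langle d\Phi|_p(v),\xi\rangle = \omega_p(\xi_M(p),v)$ from \eqref{hamilton} and the nondegeneracy of $\omega$. One should also take a moment to confirm that the equivalence $(iii)\Leftrightarrow(iv)$ genuinely needs only that the stabilizer is a \emph{closed} subgroup (so that it contains $T_\beta$ once it contains the one-parameter group), which is automatic for an action of a Lie group on a manifold.
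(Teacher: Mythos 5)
Your proof is correct and follows essentially the same route as the paper's: the same differentiation of $\|\Phi\|^2$ for (i)$\Leftrightarrow$(ii), the identification of $\image d\Phi|_p$ with the annihilator of $\h$ via Hamilton's equation and the nondegeneracy of $\omega$ for (ii)$\Leftrightarrow$(iii), and the transport of (iv) to the infinitesimal statement $\widehat\beta\in\h$ for (iii)$\Leftrightarrow$(iv). One notational slip to fix: you write ``$\beta\in\h^*=\h^\circ$'' (and earlier call $\image d\Phi|_p$ and $\h^\circ$ ``orthogonal complements of one another''), whereas in fact $\image d\Phi|_p$ \emph{equals} $\h^\circ$ and $\h^*$ embedded in $\g^*$ is the \emph{orthogonal complement} of $\h^\circ$, so the correct chain is $\beta\perp\image d\Phi|_p=\h^\circ$ iff $\beta\in(\h^\circ)^\perp=\h^*$; your closing paragraph describes $\h^*$ correctly, so this is a slip of notation rather than a gap in the argument.
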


\begin{proof}
Since $\| \Phi \|^2 = \left< \Phi , \Phi \right>$,
we have $d \| \Phi \|^2 = 2 \left< d\Phi , \Phi \right>$.
So 
$$ d \| \Phi \|^2 |_p (v) = 2 \left< d\Phi|_p(v) , \Phi(p) \right>.$$
This vanishes for all $v \in T_pM$ exactly 
if every element in the image of $d\Phi|_p \colon\thinspace T_pM \to \g^*$
is perpendicular to $\Phi(p)$.
This shows that (i) is equivalent to~(ii).

The subset of $\g^*$ that is identified with $\h^*$ by the inner product
is exactly the 
orthocomplement of the annihilator $\h^0$ of $\h$ in $\g^*$.
But, since $\h$ is the Lie algebra of the stabilizer of $p$,
the image of $d\Phi|_p \colon\thinspace T_pM \to \g^*$ is exactly equal
to $\h^0$; this is a consequence of Hamilton's equation for the momentum map
and the non-degeneracy 
of the symplectic form $\omega$. 
Thus, (ii) is equivalent to~(iii).

Consider the isomorphism $\g^* \xrightarrow{\simeq} \g$ 
that is induced by the inner product.
Let $\widehat{\beta}$ denote the image of $\beta$.
Then $T_\beta$ is the closure of the one parameter subgroup
generated by $\widehat{\beta}$,
and so (iv) is equivalent to the condition 
that $\widehat{\beta}$ belong to the infinitesimal stabilizer at $p$.
Applying the isomorphism $\g \to \g^*$,
the relation $\widehat{\beta} \in \h$ becomes (iii).
\end{proof}

\bigskip

\begin{Example}
We consider the linear action $T^2\acts \C^3$ with weights $(1,0)$, $(0,1)$ 
and $(1,-1)$:
$$
(a,b)\cdot (z_1,z_2,z_3) = (az_1, bz_2, ab^{-1}z_3).
$$
The quadratic momentum map for this action is
\begin{eqnarray*}
 Q \colon\thinspace \C^3 & \to & \R^2\\
(z_1,z_2,z_3) & \mapsto & \left( \frac{|z_1|^2}{2}+\frac{|z_3|^2}{2} \, , \, 
\frac{|z_2|^2}{2}-\frac{|z_3|^2}{2} \right).
\end{eqnarray*}
We shift it by $(-3, 1)$, to obtain the momentum map
$$
   \Phi \left( (z_1,z_2,z_3)\right) = 
        \left(-3+ \frac{|z_1|^2}{2}+\frac{|z_3|^2}{2} \, , \, 
               1+\frac{|z_2|^2}{2}-\frac{|z_3|^2}{2} \right).
$$
The momentum image is shown  in Figure~\ref{fig:C3} below.

\begin{figure}[ht]
%
% \labellist
% \small\hair 2pt
% \pinlabel $(0,0)$ at 176 66
% \endlabellist
%
\centering
\includegraphics[width=160pt]{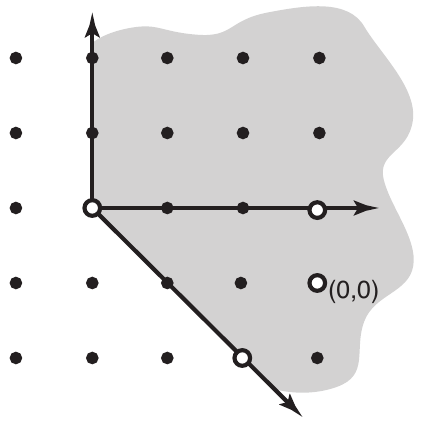}
\caption{The shaded region is the momentum map image, $\Phi\left(\C^3\right)$.
The lines are critical values for $\Phi$, and the large dots are the critical 
values for $||\Phi||^2$.}
\labell{fig:C3}
\end{figure}

A point $(z_1,z_2,z_3)\in \C^3$ is a critical point for $||\Phi||^2$ if and only
if it satisfies one of the following conditions:
\begin{enumerate}
\item[(i)] \ $z_1=z_2=z_3=0$;
\item[(ii)] \ $z_1=z_2=0$ \ and \ $\displaystyle{\frac{|z_3|^2}{2}} =2$; 
\item[(iii)] \ $z_2=z_3=0$ \ and \ 
   $\displaystyle{\frac{|z_1|^2}{2}} = 3$; or  
\item[(iv)] \ 
 $-3+\displaystyle{\frac{|z_1|^2}{2}} +\displaystyle{\frac{|z_3|^2}{2}}=0$ 
\ and \ 
$1+\displaystyle{\frac{|z_2|^2}{2}} -\displaystyle{\frac{|z_3|^2}{2}}=0$.

\end{enumerate}

We note that Condition (i) describes a single point, 
and each of Conditions (ii) and (iii) describes a single 
one--dimensional $T^2$--orbit.
Condition (iii) does not define 
an entire $\Phi$--level set, 
but each of the other conditions does.
Condition~(iv) defines a principal $T^2$ bundle over 
(the reduced space, which is)
a two--sphere.
\hfill $\diamondsuit$ 
\end{Example}

\bigskip

Let $\Phi_T \colon\thinspace M \to \t^*$ denote the momentum map
for a maximal torus $T$ of $G$; thus, $\Phi_T$ is the composition
of $\Phi \colon\thinspace M \to \g^*$ with the natural projection $\g^* \to \t^*$. 
Using the inner product, we also view $\t^*$ as a subspace of $\g^*$.

\begin{Lemma}[Kirwan {\cite[Lemma~3.1]{kirwan:book}}]  \labell{PhiT square}
Suppose that $\Phi(p) \in \t^*$.  Then  $p \in \Crit \| \Phi \|^2$
if and only if $p \in \Crit \| \Phi_T \|^2$.
\end{Lemma}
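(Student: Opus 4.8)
The plan is to express both conditions through Kirwan's criterion, Lemma~\ref{criterion}, and to observe that it produces the \emph{same} condition for $G$ and for $T$. Write $\beta = \Phi(p)$. Since $\Phi(p) \in \t^*$ and the natural projection $\g^* \to \t^*$ restricts to the identity on $\t^* \subset \g^*$, we also have $\Phi_T(p) = \beta$. Under the isomorphism $\g^* \stackrel{\cong}{\to} \g$ induced by the inner product, the subspace $\t^* \subset \g^*$ corresponds to $\t \subset \g$, so the element $\widehat{\beta} \in \g$ associated to $\beta$ in fact lies in $\t$. Hence the one--parameter subgroup generated by $\widehat{\beta}$ lies in $T$, and its closure in $G$ --- which is the group $T_\beta$ of Lemma~\ref{criterion} --- agrees with its closure taken in $T$, because $T$ is closed in $G$.

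Given this, the argument is short. By the equivalence (i)$\Leftrightarrow$(iv) of Lemma~\ref{criterion}, applied to the Hamiltonian $G$--action, $p \in \Crit \| \Phi \|^2$ if and only if $p$ is fixed by $T_\beta$. Applied instead to the Hamiltonian $T$--action --- with momentum map $\Phi_T$ and with the inner product on $\t$ obtained by restricting the one on $\g$ (automatically $\Ad$--invariant since $T$ is abelian) --- the same equivalence shows that $p \in \Crit \| \Phi_T \|^2$ if and only if $p$ is fixed by the closure in $T$ of the one--parameter subgroup generated by the element of $\t$ corresponding to $\Phi_T(p) = \beta$, that is, by $T_\beta$. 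The two conditions coincide, which proves the lemma. Equivalently one may invoke (i)$\Leftrightarrow$(iii): the stabilizer of $p$ in $T$ is $T \cap G_p$, with Lie algebra $\t \cap \h$, and since $\widehat{\beta} \in \t$ we have $\widehat{\beta} \in \h$ if and only if $\widehat{\beta} \in \t \cap \h$.

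There is no analytic content to overcome here; the single point that needs care --- and the only place where the hypothesis $\Phi(p) \in \t^*$ enters --- is the bookkeeping of the identifications $\g^* \cong \g$ and $\t^* \cong \t$, together with the fact that the projection $\g^* \to \t^*$ corresponds to orthogonal projection $\g \to \t$, which is exactly what makes ``$\widehat{\beta} \in \t$'' and ``$T_\beta \subset T$'' hold. Once that is in place, applying Lemma~\ref{criterion} to the two groups and matching the outputs completes the proof.
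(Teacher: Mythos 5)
Your proof is correct and follows essentially the same route as the paper: both arguments observe that $\beta \in \t^*$ forces $T_\beta \subset T$ and then apply the equivalence (i)$\Leftrightarrow$(iv) of Lemma~\ref{criterion} to $\Phi$ and to $\Phi_T$. Your write-up just spells out the bookkeeping (that $\Phi_T(p)=\beta$ and that the closures agree) in more detail than the paper does.
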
 

\begin{proof}
Let $\beta = \Phi(p)$, and let $T_\beta$ denote the closure in $G$
of the one parameter subgroup that is generated by the element of $\g$
that corresponds to $\beta$ by the inner product.  
The assumption that $\beta \in \t^*$ implies that $T_\beta$ 
is contained in $T$.
The lemma then follows from the equivalence of (i) and (iv)
in Lemma~\ref{criterion}, applied to $\Phi$ and to $\Phi_T$.
\end{proof}

In preparation for proving Theorem~\ref{norm square},
we will examine linear symplectic actions of compact Lie groups.
We start with torus actions.

Recall that, for a torus $T$ with Lie algebra $\t$ and dual space $\t^*$,
the characters (homomorphisms $T \to S^1$) are determined
by their differentials at the identity.
Having identified the Lie algebra of $S^1$ with $\R$,
these differentials form the weight lattice $\t^*_\Z$ in $\t^*$.

\begin{Lemma} \labell{linear torus}
Fix a linear symplectic action of a torus $T$ on a symplectic vector space $V$.
Fix a $T$ invariant compatible complex structure on $V$.
Consider the decomposition of $V$ into weight spaces,
$$ V = \bigoplus_{\mu} V_\mu .$$
That is, for every weight $\mu \in \t^*_\Z$, 
denoting the corresponding character $T \to S^1$ by $a \mapsto a^\mu$, we have
$$ V_\mu = \left\{ z \in V \ \left| \ 
        a \cdot z = a^\mu z \text{ for all } a \in T \right. \right\} .$$
\begin{enumerate}
\item
Let $\Phi_T \colon V \to\thinspace \t^*$
be a momentum map for the $T$ action on $V$.
Fix an inner product on $\t^*$.
Then the set $\Phi_T \left( \Crit \| \Phi_T \|^2 \right)$ is finite.
\item
Let $Q \colon\thinspace V \to \h^*$ be the quadratic momentum map
for a linear symplectic action of a compact Lie group $H$ 
that commutes with the $T$ action
and preserves the complex structure.  
Then, for any $z \in V$, writing
$$ z = \sum_{\mu} z_\mu \quad , \quad z_\mu \in V_\mu, $$
we have
$$ Q(z) = \sum_\mu Q(z_\mu).$$
\end{enumerate}
\end{Lemma}

\medskip

\begin{proof}[Proof of Part (1) of Lemma~\ref{linear torus}]
We can take the indexing set for the 
decomposition into weight spaces
to be
$\calW := \left\{ \mu \in \t_{\Z} \ | \ V_\mu \neq \{ 0 \} \right\}$.
Because $V$ is finite dimensional, this set is finite.

Let $\beta = \Phi_T(0)$.  
For $z \in V$, writing
$$ z = \sum_{\mu \in \calW} z_\mu \quad , \quad z_\mu \in V_\mu ,$$
we have
\begin{equation*}
   \Phi_T(z) = \beta + \sum_{\mu \in \calW} \frac{|z_\mu|^2}{2}\mu .
\end{equation*}

By the equivalence of (i) and (ii) in Lemma~\ref{criterion}, 
we have that $z \in \Crit \| \Phi_T \|^2$
if and only if $\Phi_T(z) \perp \image d\Phi_T|_z$.
By the above formula for the momentum map,
$\Phi_T(z)$ belongs to the affine space
$$\beta + \text{span} \{ \mu \, | \, z_\mu \neq 0 \},$$
and $\image d\Phi_T|_z$ is the corresponding linear space
$\text{span} \{ \mu \, | \, z_\mu \neq 0 \}$.
So the condition $\Phi_T(z) \perp \image d\Phi_T|_z$ holds
if and only if $\Phi_T(z)$ is the foot of the perpendicular
from the origin to the affine space. 

For every subset $I$ of $\calW$, let $\beta_I$ denote 
the foot of the perpendicular from the origin in $\t^*$
to the affine space $\beta + \text{span} \{ \mu \, | \, \mu \in I \}$.
Then $\Phi_T(\Crit \| \Phi_T \|^2)$ is contained
in the finite set $\{ \beta_I \, | \, I \subset \calW \}$.
This proves Part (1) of Lemma~\ref{linear torus}.
\end{proof}

\begin{proof}[Proof of Part (2) of Lemma~\ref{linear torus}]
Denote the $\h$-action on $V$ by $\xi \colon z \mapsto \xi \cdot z$
for $\xi \in \h$.  
Then the quadratic momentum map is 
\begin{equation} \labell{Q from B}
 Q(z) = \frac{1}{2} B(z,z) ,
\end{equation}
where $B$ is the (symmetric) $\h^*$ valued bilinear form
whose components are given by
\begin{equation} \labell{formula for B}
 B^\xi(u,v) = \omega ( \xi \cdot u , v) 
         \qquad \text{for all $\xi \in \h$}.
\end{equation}

The weight spaces $V_\mu$ are $H$-invariant;
this follows from the fact that the $H$ action commutes with the $T$ action
and preserves the complex structure,
and it implies that
\begin{equation} \labell{invt}
\text{ for any $\mu \in \calW$ and $\xi \in \h$,
if $z_\mu \in V_\mu$ then also $\xi \cdot z_\mu \in V_\mu$.}
\end{equation}

The spaces $V_\mu$ are symplectically orthogonal:
\begin{equation} \labell{direct}
\text{
for any $\mu_1 \neq \mu_2$, 
if $\zeta_1 \in V_{\mu_1}$ and $\zeta_2 \in V_{\mu_2}$,
then $\omega(\zeta_1,\zeta_2) = 0$.
}
\end{equation}

By~\eqref{formula for B}, \eqref{invt}, and~\eqref{direct}, 
\begin{equation} \labell{perp}
\text{whenever $\mu_1 \neq \mu_2$, we have $B(z_{\mu_1},z_{\mu_2}) = 0$.}
\end{equation}

Now,
\begin{align*}
 Q(z) &= \frac{1}{2} B(z,z) \quad \text{by~\eqref{Q from B}} \\
 &= \sum_{\mu_1,\mu_2} \frac{1}{2} B(z_{\mu_1},z_{\mu_2}) 
              \quad \text{ because $B$ is bilinear and $z=\sum z_\mu$ } \\
 &= \sum_{\mu} \frac{1}{2} B(z_\mu,z_\mu)
  + \sum_{\mu_1 \neq \mu_2} \frac{1}{2} B(z_{\mu_1},z_{\mu_2}) & \\
 &= \sum_\mu Q(z_\mu) \quad \text{ by~\eqref{Q from B} and~\eqref{perp}.}
\end{align*}
This proves Part (2) of Lemma~\ref{linear torus}.
\end{proof}

Next, we examine linear symplectic actions of possibly non-abelian compact Lie groups,
with attention to a neighbourhood of the origin. 

\begin{Lemma} \labell{linear nonabelian}
Let $\Phi \colon V \to \h^*$ be a momentum map 
for a linear symplectic action of a compact Lie group $H$ 
on a symplectic vector space $V$.
Fix an $\Ad$-invariant inner product on $\h$.
Then there exist 
\begin{itemize}
\item
an $H$ invariant linear subspace $N$ of $V$,
\item
an $H$ invariant closed connected subset $C$ of $N$ that contains the origin,
and 
\item
an open neighbourhood $U$ of $C$ in $V$, 
\end{itemize}
such that
\begin{enumerate}
\item
$N$ is a minimizing manifold for $\| \Phi \|^2$ along $C$, and
\item
the intersection of $U$ with the critical set $\Crit \| \Phi \|^2$ 
is exactly $C$.
\end{enumerate}
\end{Lemma}

\begin{proof}[Construction of $C$ and $N$ for Lemma~\ref{linear nonabelian}]
Let $\beta = \Phi(0)$.  Then 
\begin{equation} \labell{Phi vs Q}
 \Phi(\cdot) = \beta + Q(\cdot) , 
\end{equation}
where $Q \colon\thinspace V \to \h^*$ is the quadratic momentum map.
Because $\Phi$ is equivariant, $\beta \in \h^*$ is fixed
under the coadjoint action of $H$.

Let $T_\beta$ denote the closure in $H$
of the one parameter subgroup that is generated by the element of $\h$
that is identified with $\beta$ by the inner product.
For each weight $\mu \in (\t_\beta)^*_\Z$, let
$V_\mu = \{ \mu \in V \ | \ a \cdot z = a^\mu z 
       \text{ for all } a \in T_\beta \}$
with respect to an $H$-invariant compatible complex structure.
The definition of $T_\beta$ implies that
if $\left< \mu , \beta \right> = 0$
then all the vectors in $V_\mu$ are fixed by $T_\beta$
and so $\mu = 0$.
So we can write the weight space decomposition as
$$ V \ = \ V^{T_\beta} 
\oplus  
\bigoplus_{\substack{ \mu \ \text{s.t.} \\ \left< \mu,\beta \right> > 0 }}
       V_\mu 
\oplus
\bigoplus_{\substack{ \mu \ \text{s.t.} \\ \left< \mu,\beta \right> < 0 }}
       V_\mu. $$

We set
$$ C = Q^{-1}(0) \cap V^{T_\beta} $$
and
$$ N = V^{T_\beta} \oplus 
\bigoplus_{\substack{ \mu \ \text{s.t.} \\ \left< \mu,\beta \right> > 0 }}
       V_\mu. $$

Because $\beta$ is fixed under the coadjoint action of $H$,
the torus $T_\beta$ is contained in the centre of $H$,
so the weight spaces $V_\mu$ are $H$ invariant.
So $N$ is an $H$ invariant linear subspace of $V$,
and $C$ is an $H$ invariant closed (conical, hence) connected subset of $N$
that contains the origin.
\end{proof}

\begin{proof}[Proof of Part (1) of Lemma~\ref{linear nonabelian}]
We will now show that  
$N$ is a minimizing manifold for $\| \Phi \|^2$ along $C$.

For $z \in V$, writing
$$ z = \sum_{\mu} z_\mu \quad , \quad z_\mu \in V_\mu ,$$
we have
\begin{equation} \labell{formula for Q}
Q(z) = \sum_\mu \frac{|z_\mu|^2}{2} \mu .
\end{equation}

We also have
\begin{align}
\nonumber
 \| \Phi (z) \|^2 
 & = \Big\| \beta + Q(z) \Big\|^2  \nonumber \\
\nonumber
 &= \| \beta \|^2 + 2 \left< \beta , Q(z) \right> + \| Q(z) \|^2 \\
\labell{star}
 &= \| \beta \|^2 + \sum_\mu |z_\mu|^2 \left< \mu , \beta \right>
 + \| Q(z) \|^2 \qquad \text{ by~\eqref{formula for Q}.}
\end{align}

Now suppose that $z \in N$.  
Then 
$$ z = z_0 
     + \sum_{\substack{ \mu \ \text{s.t.} \\ \left< \mu,\beta \right> > 0 }}
       z_\mu. $$
From~\eqref{star}, we get that
$$ \| \Phi(z) \|^2 \geq \| \beta \|^2 ,$$
with equality if and only if 
\begin{equation} \labell{two conditions}
 |z_\mu|^2 \left< \mu,\beta \right> = 0 \text{ for all } \mu
\quad \text{and} \quad Q(z) = 0.
\end{equation}
The first of these two conditions holds if and only if $ z \in V^{T_\beta}$. 
Thus, the conditions~\eqref{two conditions} hold exactly if $z \in C$.

We have shown that $\| \Phi(\cdot) \|^2|_N \geq \| \beta \|^2$,
with equality exactly at the points of~$C$.
That is, $N$ satisfies the second of the two conditions
for being a minimizing manifold for $\| \Phi \|^2$ along $C$.

\medskip

Because $\| \Phi (\cdot) \|^2 |_N$ attains its minimum exactly 
at the points of $C$,
the Hessian $\Hess \| \Phi \|^2 |_{T_xN} $ is positive semidefinite
at every $x \in C$.
It remains to show that, for all $x \in C$,
the Hessian $\Hess \| \Phi \|^2$ is negative definite
on a subspace of $T_xV$ that is complementary to $T_xN$.
We can take this subspace to be the image of
$ \displaystyle{ 
\bigoplus_{\substack{ \mu \ s.t. \\ \left< \mu , \beta \right> < 0}} } V_\mu $
under the natural identification of $V$ with $T_xV$.
Thus, for $x \in C$, we need to show that the Hessian of the map
$$ \left( \zeta \mapsto \| \Phi(x+\zeta) \|^2 \right) 
 \colon 
 \bigoplus_{\substack{\mu \ s.t. \\ \left< \mu , \beta \right> <0 }} V_\mu 
 \ \to \ \R $$
is negative definite at $\zeta = 0$.

Consider
$$ z = x + \zeta \quad \text{ with $x \in C$ and $\zeta \in 
  \bigoplus\limits_{\substack{\mu \ s.t.\\ \left< \mu,\beta \right> <0}} 
   V_\mu$} . $$
Because $x \in C$, we have $x \in V^{T_\beta}$. So $z_0 = x$, and
\begin{equation} \labell{eq-cases}
 |z_\mu|^2 \left< \mu , \beta \right>
 = \begin{cases}
 \displaystyle{
 |\zeta_\mu|^2 \left< \mu,\beta \right>  }
                   & \text{ if } \left< \mu,\beta \right> < 0 \\
 \ 0 & \text{ otherwise. }
   \end{cases} 
\end{equation}
Part (2) of Lemma~\ref{linear torus}, applied to the actions of $T_\beta$ 
and of $H$ on $V$, gives 
$Q(x+\zeta) = Q(x)+Q(\zeta)$.
Because $x \in C$, we have $Q(x)=0$.  So
\begin{equation} \labell{same Q}
Q(z) = Q(\zeta).
\end{equation}

Substituting~\eqref{eq-cases} and~\eqref{same Q} into~\eqref{star}, we get
$$ \| \Phi(x+\zeta) \|^2 = \| \beta \|^2
   + \sum_{\substack{\mu \ s.t.\\ \left< \mu,\beta \right> <0 }}
     |\zeta_\mu|^2 \left< \mu , \beta \right>  \\ 
 + \| Q(\zeta) \|^2.$$
Because $\zeta \mapsto \| Q(\zeta) \|^2$ is homogeneous of degree four,
the Hessian of the map \ $\zeta \mapsto \| \Phi(x+\zeta) \|^2$ \
at \ $\zeta =0$ \ is the bilinear form that corresponds to the quadratic form
$$ \zeta \mapsto 
   \sum_{\substack{\mu \ s.t.\\ \left< \mu,\beta \right> <0 }} 
   |\zeta_\mu|^2 \left< \mu,\beta \right> .$$
It is negative definite, as required.
This proves that $N$ also satisfies the first
of the two conditions for being a minimizing submanifold for $\| \Phi \|^2$
along $C$.
This completes the proof of Part (1) of Lemma~\ref{linear nonabelian}.
\end{proof}

\begin{proof}[Proof of Part (2) of Lemma~\ref{linear nonabelian}]

Let $T$ be a maximal torus in $H$ that contains $T_\beta$.
Let 
$$\Phi_T \colon\thinspace V \to \t^*$$ 
denote the momentum map for $T$.

Consider $\alpha \in \t^* \subset \h^*$,
and suppose that $\alpha$ is in $\Phi(\Crit \| \Phi \|^2)$.
Take any $z \in \Crit \| \Phi \|^2 $ with $\Phi(z) = \alpha$.
Because $\Phi(z) \in \t^*$,
\begin{itemize}
\item
$z \in \Crit (\| \Phi \|^2)$ if and only if $z \in \Crit (\| \Phi_T \|^2)$,
\qquad by Lemma \ref{PhiT square};
\item
$\Phi(z) = \Phi_T(z)$.
\end{itemize}
Thus, $\alpha$ is also in $\Phi_T( \Crit \| \Phi_T \|^2 )$.

By Part (1) of Lemma~\ref{linear torus},
applied to the $T$ action on $V$,
the set of such $\alpha$ is finite.
Note that this set contains $\beta$.
So we can write
$$ \Phi ( \Crit \| \Phi \|^2 ) \cap \t^*
 =  \{ \beta , \alpha_1, \ldots , \alpha_m \},$$
where $\alpha_1,\ldots,\alpha_m$ are different from $\beta$. 
Because $\Crit \| \Phi \|^2 $ is $H$ invariant and the momentum map
is $H$ equivariant, we conclude that
$$ \Phi ( \Crit \| \Phi \|^2 )
 =  \{ \beta \} \cup \bigcup_{j=1}^m \Ad^*(H)( \alpha_j ) .$$

The coadjoint orbits $\Ad^*(H)(\alpha_j)$
are closed in $\h^*$ and do not contain $\beta$,
so a sufficiently small neighbourhood of $\beta$ in $\g^*$
does not meet any of these orbits.
Let $U$ be the preimage in $V$ of such an open neighborhood of $\beta$.  Then 
\begin{align*}
   U \cap \Crit \| \Phi \|^2 & = \Phi^{-1}(\beta) \cap \Crit \| \Phi \|^2 \\
  & = Q^{-1}(0) \cap \Crit \| \Phi \|^2 \qquad \text{by~\eqref{Phi vs Q}} \\
  & = Q^{-1}(0) \cap V^{T_\beta} \qquad 
   \text{by the equivalence of (i) and (iv) in Lemma~\ref{criterion}} \\
  & = C ,
\end{align*}
as required.

This completes the proof of Part (2) of Lemma~\ref{linear nonabelian}.
\end{proof}

Next, we consider Hamiltonian $G$ models such as those that occur
in the local normal form theorem.
We will need to keep track only of the $G$ action and momentum map
on these models.

\begin{Definition} \labell{Hamiltonian G model}
Fix a compact Lie group $G$, an $\Ad(G)$ invariant inner product
on its Lie algebra $\g$, and the corresponding $\Ad^*(G)$ invariant
inner product on the dual space $\g^*$.
A \textbf{Hamiltonian} $\mathbf{G}$ \textbf{model} is a manifold $Y$, 
equipped with a map
$$ \Phi_Y \colon\thinspace Y \to \g^* ,$$
that are obtained from the following construction.

Fix an element $\beta \in \g^*$.
Let $G_\beta$ be the stabilizer of $\beta$
under the coadjoint action of $G$ on $\g^*$
and $\g_\beta$ the Lie algebra of $G_\beta$.
Let $H$ be a closed subgroup of $G_\beta$.
Let $V$ be a symplectic vector space on which $H$ acts linearly 
and symplectically with a quadratic momentum map 
$$ Q \colon\thinspace V \to \h^*.$$ 
Set
\begin{equation} \labell{G model}
   Y \ = \ G \times_H (V \times (\g_\beta/\h)^*). 
\end{equation}
Here, $H$ acts on $V$ through the given action
and on $(\g_\beta/\h)^*$ through the coadjoint representation,
and $Y$ is the quotient of $G \times V \times (\g_\beta/\h)^* $
by the $H$ action
$$ H \ni a \colon\thinspace 
   (g,z,\nu) \mapsto (g a^{-1} , a \cdot z , a \cdot \nu). $$
For $[ g,z,\nu ]$ in the model $G \times_H ( V \times (\g_\beta/\h)^* )$,
set
$$ \Phi_Y ([g,z,\nu]) \ = \ \Ad^*(g) \left( \beta + Q(z) + \nu \right) ,$$
where $\h^*$ and $(\g_\beta/\h)^*$ are identified with subspaces of $\g^*$
through the given $\Ad^*(G)$--invariant inner product. 

A Hamiltonian $G$ model $Y = G \times_H (V \times (\g_\beta/\h)^* )$ 
is \textbf{centred} if it satisfies one, hence both, of the following
equivalent conditions.
\begin{itemize}
\item
The value $\beta$ is fixed by $\Ad^*(G)$.
\item
The torus $T_\beta$ is contained in the centre of $G$.

\end{itemize}
\end{Definition}

\begin{Remark} \labell{rk:twoform} 
On any Hamiltonian $G$ model $Y=G \times_H (V \times (\g_\beta/\h)^*)$
there exists a $G$-invariant closed two-form $\omega_Y$ 
that is non-degenerate at the basepoint $[1,0,0]$
and for which the map $\Phi_Y \colon Y \to \g^*$ is a momentum map.
If the model is centred, there exists such a $\omega_Y$
that is everywhere non-degenerate,
and the central orbit $G \cdot [1,0,0]$ in $Y$ is isotropic with respect 
to $\omega_Y$.
\end{Remark}

We now examine centred Hamiltonian $G$ models.

\begin{Lemma}\labell{case of centred model}
Fix a centred Hamiltonian $G$ model $Y$ with momentum map
$$ \Phi_Y \colon\thinspace Y \to \g^* .$$
Assume that the basepoint $[1,0,0]$ of $Y$ is a critical point for 
$\|\Phi_Y\|^2$.
Then there exist 
\begin{itemize}
\item
a $G$ invariant closed connected subset $C$ of $Y$ 
that contains the basepoint $[1,0,0]$,
\item
a $G$ invariant closed connected submanifold $N$ of $Y$ that contains $C$,
and 
\item
a $G$ invariant open neighbourhood $U$ of $C$ in $Y$, 
\end{itemize}
such that
\begin{enumerate}
\item
$N$ is a minimizing manifold for $\| \Phi_Y \|^2$ along $C$, and
\item
the intersection of $U$ with the critical set $\Crit \| \Phi_Y \|^2$
is exactly $C$.
\end{enumerate}
\end{Lemma}

\begin{proof}
By the equivalence of (i) and (iii) in Lemma~\ref{criterion}, 
applied to the basepoint $[1,0,0]$ in the model $Y$,
the condition that $[1,0,0]$ is critical for $\| \Phi_Y \|^2$
is equivalent to
$$ \beta \in \h^*.$$
(Recall that we have identified $\h^*$ with a subspace of $\g^*$.)

Because $\beta$ is fixed by $\Ad^*(H)$
(as an element of $\g^*$ and hence as an element of $\h^*$),
$$ \beta + Q(\cdot)$$
is an (equivariant) momentum map for the $H$ action on $V$.
Applying Lemma~\ref{linear nonabelian} to it,
let $N_H$ be an $H$ invariant subspace of $V$,
let $C_H$ be a closed connected subset of $N_H$
that contains the origin such that 
$N_H$ is a minimizing manifold for $\| \beta + Q(\cdot) \|^2$ along $C_H$,
and let $U_H$ be an $H$ invariant open neighbourhood of $C_H$ in $V$
whose intersection with $\Crit \| \beta + Q(\cdot) \|^2$ is exactly $C_H$.

Set 
$$ N = G \times_H \left( N_H \times (\g_\beta/\h)^* \right),$$
$$ C = G \times_H \left( C_H \times \{ 0 \} \right), $$
and
$$ U = G \times_H \left( U_H \times (\g_\beta/\h)^* \right). $$

We have
\begin{equation} \labell{sum}
  \| \Phi_Y([g,z,\nu]) \|^2 = \| \beta + Q(z) + \nu \|^2
 = \| \beta + Q(z) \|^2 + \| \nu \|^2 .
\end{equation}
The first equality is by the formula for $\Phi_Y$ 
and since the norm on $\g^*$ is $\Ad^*(G)$ invariant.
The second equality is because $\beta$ and $Q(z)$ are in~$\h^*$,
and $\nu$ is in $(\g_\beta/\h)^*$, 
which, as a subspace of $\g^*$, is orthogonal to $\h^*$.
From~\eqref{sum} we deduce that 
\begin{equation} \labell{criterion in model}
 [g,z,\nu] \in \Crit \| \Phi_Y\|^2
\quad \text{ if and only if } \quad
 \nu = 0 \text{ and } z \in \Crit \| \beta + Q(\cdot) \|^2.
\end{equation}

The properties of $C_H$ and $U_H$, and~\eqref{criterion in model},
imply that $C$ is closed in the model $Y$
and $U$ is a neighbourhood of $C$ in the model $Y$
whose intersection with $\Crit \| \Phi_Y \|^2$ is equal to $C$.

Because $\| \beta + Q(\cdot) \|^2 |_{N_H}$
attains its minimum exactly on $C_H$,
and by~\eqref{sum},
we conclude that $\| \Phi_Y \|^2|_{N}$ attains its minimum exactly 
on $C$.  So $N$ satisfies the second of the two conditions
for being a minimizing manifold for $\| \Phi_Y \|^2$ along $C$.

As in the proof of Lemma~\ref{linear nonabelian}, let $E_H$ be 
an $H$ invariant complementary subspace to $N_H$ in $V$, such that,
for any $x_H \in C_H$, 
the Hessian at $\zeta = 0$ of the function
\begin{equation} \labell{on slice}
 ( \zeta \mapsto \| \beta + Q(x_H+\zeta)\|^2 ) \colon \ E_H \to \R 
\end{equation}
is negative definite.
Now take an arbitrary point of $C$;
write it as $x = [g,x_H,0]$ with $g \in G$ and $x_H \in C_H$.
The map
$\zeta \mapsto [g,x_H+\zeta,0]$, from a sufficiently small neighbourhood
of the origin in $E_H$ to $Y$,
provides a transverse slice to $N$ at $x$.
The pullback of $\| \Phi_Y \|^2$ by this map 
is the map~\eqref{on slice},
whose Hessian at $\zeta = 0$ is negative definite.  
So $N$ also satisfies the first of the two conditions
for being a minimizing manifold for $\| \Phi_Y \|^2$ along $C$.
\end{proof}

We now examine Hamiltonian $G$ models that are not necessarily centred.

\begin{Lemma}\labell{case of model}
Fix a Hamiltonian $G$ model $Y$ with momentum map
$$ \Phi_Y \colon\thinspace Y \to \g^* .$$
Assume that the basepoint $[1,0,0]$ of $Y$ is a critical point for $\Phi_Y$.
Then there exist 
\begin{itemize}
\item
a $G$ invariant closed connected subset $C$ of $Y$ 
that contains the basepoint $[1,0,0]$,
\item
a $G$ invariant closed connected submanifold $N$ of $Y$ that contains $C$,
and
\item
a $G$ invariant open neighbourhood $U$ of the basepoint $[1,0,0]$ in $Y$, 
\end{itemize}
such that
\begin{enumerate}
\item
$N$ is a minimizing manifold for $\| \Phi_Y \|^2$ along $C$, and
\item
the intersection of $U$ with the critical set $\Crit \| \Phi_Y \|^2$
is exactly $U \cap C$.
\end{enumerate}
\end{Lemma}

\begin{proof}
We use the notation of Definition~\ref{Hamiltonian G model}.
We have the centred Hamiltonian $G_\beta$ model
$$ Y' = G_\beta \times_H (V \times (\g_\beta/\h)^*) $$
with the momentum map
$$ \Phi' \colon\thinspace Y' \to \g_\beta^* \quad , \quad
  \Phi'([g,z,\nu]) = \Ad^*(g)(\beta+\Phi_V(z)+\nu)  .$$

We can identify
\begin{equation} \labell{identify}
 Y = G \times_{G_\beta} Y' ,
\end{equation}
exhibiting $Y$ as bundle with fibre $Y'$
and base $G/G_\beta$. (The base is also naturally identified 
with the coadjoint orbit through $\beta$.)

Let $U_{\g_\beta^*}$ be a $G_\beta$ invariant neighbourhood of $\beta$ 
in $\g_\beta^*$,
(where we have identified $\g_\beta^*$ with a subspace of $\g^*$,)
that is sufficiently small so that the ``sweeping map"
\begin{equation} \labell{the map}
 \left( [g,\eta] \mapsto \Ad^*(g)(\eta)  \right)
 \colon\thinspace G \times_{G_\beta} U_{\g_\beta^*} \to \g^*
\end{equation}
is a $G$ equivariant open embedding.

The restriction of
$$ \Phi_Y \colon Y \to \g^* $$
to the open subset 
$$ G \times_{G_\beta} \left( {\Phi'}^{-1} (U_{\g_\beta^*}) \right) $$
(with the identification~\eqref{identify})
is given by the composition of the map
$$ \left( [g,y'] \mapsto [g,\Phi'(y')] \right) \colon\thinspace
 G \times_{G_\beta} Y' \to G \times_{G_\beta} \g_\beta^* $$
with the open embedding~\eqref{the map}.  
So, for any $y' \in {\Phi'}^{-1} (U_{\g_\beta^*}) $, we have
$$ [g,y'] \in \Crit \| \Phi_Y \|^2 \quad \text{ if and only if } \quad
  y' \in \Crit \| \Phi' \|^2. $$

In particular, because the basepoint $[1,0,0]$ of $Y$
is critical for $\| \Phi_Y \|^2$,
the basepoint $[1,0,0]$ of $Y'$ is also critical for $\| \Phi' \|^2$.
(Alternatively, this fact follows from the equivalence
of (i) and (iii) in Lemma~\ref{criterion},
applied to the basepoint $[1,0,0]$ in the model $Y'$
and to the basepoint $[1,0,0]$ in the model $Y$.
In both cases, being critical for the norm squared of the momentum map
is equivalent to $\beta \in \h^*$,
where we embed $\h^* \subset \g_\beta^* \subset \g^*$.)

Applying Lemma~\ref{case of centred model} to the centred
Hamiltonian $G_\beta$ model $Y'$,
let $C'$ be a $G_\beta$ invariant closed connected subset of $Y'$
that contains the basepoint $[1,0,0]$,
let $N'$ be a $G_\beta$ invariant closed connected submanifold of $Y'$
that contains $C'$, and 
let $U'$ be a $G_\beta$ invariant open neighbourhood of $C'$ in $Y'$,
such that
\begin{enumerate}
\item
$N'$ is a minimizing manifold for $\|\Phi'\|^2$ along $C'$, and
\item
the intersection of $U'$ with the critical set $\Crit \| \Phi' \|^2$
is exactly $C'$.
\end{enumerate}

With the identification~\eqref{identify}, set
$$ C = G \times_{G_\beta} C' $$
and 
$$ N = G \times_{G_\beta} N' .$$
Then $C$ is a $G$ invariant closed connected subset of $Y$
that contains the basepoint $[1,0,0]$,
and $N$ is a $G$ invariant closed connected submanifold of $Y$
that contains~$C$.
Because $\| \Phi_Y ([g,y']) \|^2 = \| \Phi'(y') \|^2$,
the fact that $N'$ is a minimizing manifold for $\| \Phi' \|^2$ along $C'$
implies that $N$ is a minimizing manifold for $\| \Phi_Y \|^2$ along $C$.

Set 
$$ U = G \times_{G_\beta} \left( 
		       U' \cap {\Phi'}^{-1} (U_{\g_\beta^*}) \right) .$$
As noted earlier,
for $y=[g,y'] \in U$, we have
$$ y \in \Crit \| \Phi_Y \|^2 \quad \text{ if and only if } \quad
  y' \in \Crit \| \Phi' \|^2.$$

The fact that $U' \cap \Crit \| \Phi' \|^2 = C'$
then implies that $U \cap \Crit \| \Phi_Y \|^2 = U \cap C$.

\end{proof}

We can now prove Theorem~\ref{norm square}.

\begin{proof}[Proof of Theorem~\ref{norm square}]
Recall that
$\Phi \colon\thinspace M \to \g^*$ is an (equivariant) momentum map
for the action of a compact Lie group $G$ on a symplectic manifold $M$
and that we have fixed an $\Ad$-invariant inner product on $\g$
and the induced inner product on~$\g^*$.

Fix any critical point $p \in \Crit \| \Phi \|^2$.

By the local normal form theorem (see Guillemin-Sternberg \cite{GS1984} 
or Marle \cite{marle1985}; also see Sjamaar \cite[p.~77--78]{sjamaar}),
for each $G$ orbit $G \cdot p$ in $M$ there exists a 
Hamiltonian $G$ model 
\begin{equation} \labell{G model}
   Y = G \times_H (V \times (\g_\beta/\h)^*) \quad , \quad
   \Phi_Y \colon\thinspace Y \to \g^* , 
\end{equation}
and a $G$ equivariant diffeomorphism 
\begin{equation} \labell{lnf}
 \calO_M \to \calO_Y 
\end{equation}
from a neighbourhood $\calO_M$ of the orbit $G \cdot p$ in $M$
to a neighbourhood $\calO_Y$ of the central orbit $G \cdot [1,0,0]$ in $Y$
that takes $p$ to $[1,0,0]$ and whose composition with $\Phi_Y$ is $\Phi$.

Because $p$ is a critical point for $\| \Phi \|^2$,
the basepoint $[1,0,0]$ of $Y$ is a critical point for $\| \Phi_Y \|^2$.
By Lemma~\ref{case of model}, 
there exist a $G$-invariant closed connected subset $C_Y$ of $Y$,
a $G$-invariant closed connected submanifold $N_Y$ of $Y$ that contains $C_Y$,
and a $G$-invariant open neighbourhood $U_Y$ of the basepoint $[1,0,0]$ in $Y$,
such that
$N_Y$ is a minimizing manifold for $\| \Phi_Y \|^2$ along $C_Y$,
and such that
the intersection of $U_Y$ with the critical set $\Crit \| \Phi_Y \|^2$
is exactly $U_Y \cap C_Y$.

Let $C$, $N$, and $U$ be the preimages 
of $\calO_Y \cap C_Y$, 
of $\calO_Y \cap N_Y$,
and of $\calO_Y \cap U_Y$, 
under the local normal form diffeomorphism~\eqref{lnf}.
Then $N$ is a minimizing manifold for $\Crit \| \Phi \|^2$ along $C$,
and $U$ is an open neighbourhood of $G \cdot p$
whose intersection with $\Crit \| \Phi \|^2$ is $U \cap C$.

Because the critical point $p$ was arbitrary,
this shows that every point in $M$
has a neighbourhood on which $\|  \Phi \|^2 $ is minimally degenerate.
By Theorem~\ref{main}, 
we conclude that $\|\Phi\|^2$ is minimally degenerate.
This completes the proof of Theorem~\ref{norm square}.
\end{proof}

%--------------------------------------
\section{Morse theoretic consequences}
\labell{sec:Morse}
%--------------------------------------

For the convenience of the reader, and to put our work in context, 
we now recall the main topological consequences of the fact that 
the norm-square of the momentum map is minimally degenerate.

Let $M$ be a compact manifold 
and $f \colon\thinspace M \to \R$ a smooth function 
that is minimally degenerate.
So the critical set $\Crit f$ is a locally finite union
of closed subsets~$C$, on each of which $f$ is constant,
and, for each such critical set $C$, there exists a minimizing submanifold
$N_C$ for $f$ along $C$.

Kirwan developed the analytic tools necessary to extend results about
Morse functions to minimally degenerate functions \cite[\S 10]{kirwan:book}.
There exists a Riemannian metric on $M$ for which
the gradient vector field of $f$ is tangent to the minimizing manifold $N_C$
on a neighbourhood of $C$, for each critical set $C$.
For such a Riemannian metric, we let 
$$
S_C := \left\{ x\in M \ \left| \ \begin{array}{c} 
\mbox{the gradient trajectory for $-f$ starting at $x$ } \\
\mbox{ has a limit point in } C\end{array}  \right.  \right\}.
$$
Kirwan then established the following facts.
First, $S_C$ is a submanifold of $M$ that coincides with $N_C$ near $C$. 
Moreover, the inclusion map $C \subset S_C$ induces an 
isomorphism in \v Cech cohomology \cite[Lemma 10.17]{kirwan:book}.
Finally, the submanifolds $S_C$ give a decomposition of $M$ 
into a disjoint union
\begin{equation}\labell{eq:decomp}
M = \bigsqcup_C S_C
\end{equation}
that satisfies the  frontier condition
$$ \closure(S_C) \ {\mbox{\Large{$\subset$}}} \ 
S_C\ \  \cup 
\bigcup_{\substack{C' \text{ s.t. } \\ f(C') > f(C)}} S_{C'}.$$

In the presence of a compact connected group action that preserves~$f$, 
we can choose the Riemannian metric to be invariant,
and then the submanifolds $S_C$ are invariant
and the inclusions $C \to S_C$ induce isomorphisms in equivariant cohomology.
The decomposition \eqref{eq:decomp} gives rise to the Morse inequalities,
and, in the presence of a group action, the equivariant Morse inequalities.
When $f=||\Phi||^2$ is the norm-square of the momentum map for a Hamiltonian
action of a compact Lie group, \eqref{eq:decomp}
leads to Kirwan surjectivity  \cite[pp.\ 31--34]{kirwan:book}.

\begin{Theorem}[Kirwan surjectivity] \labell{thm:surj}
Let a compact Lie group $G$ on a compact symplectic manifold $M$
with momentum map $\Phi \colon\thinspace M \to \g^*$.
Then the inclusion $\Phi^{-1}(0)\to M$ induces
a surjection in equivariant cohomology
$$
H_G^*(M;\Q)\to H_G^*(\Phi^{-1}(0);\Q).
$$
\end{Theorem}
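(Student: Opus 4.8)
The plan is to run Kirwan's Morse‑theoretic argument, using as inputs the facts assembled above. By Theorem~\ref{norm square} the function $f:=\|\Phi\|^2$ is minimally degenerate; and, as is guaranteed in the symplectic setting (Remark~\ref{differences}; in each local model appearing in the proof of Theorem~\ref{norm square} the negative weight spaces $\bigoplus_{\langle\mu,\beta\rangle<0}V_\mu$ are complex subspaces and hence carry a canonical $H$--invariant orientation), the minimizing submanifolds are co‑orientable, so the negative normal bundles $\nu_C^-$ are $\Q$--orientable. Since $f\geq 0$ with $f^{-1}(0)=\Phi^{-1}(0)$, the set $\Phi^{-1}(0)$ is exactly the critical set of $f$ at the minimum value $0$; call it $C_0$. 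I would fix a $G$--invariant Riemannian metric as in Section~\ref{sec:Morse}, list the critical values $0=c_0<c_1<\dots<c_N$, choose $\eps>0$ smaller than all gaps $c_i-c_{i-1}$, and set $M_i:=f^{-1}(-\infty,c_i+\eps]$, so that $M_N=M$ while the downward gradient flow deformation retracts $M_0$ $G$--equivariantly onto $C_0=\Phi^{-1}(0)$ (at the minimum the negative normal bundle has rank zero, so $M_0\subset S_{C_0}$). It then suffices to prove that each restriction $H^*_G(M_i;\Q)\to H^*_G(M_{i-1};\Q)$ is surjective, since composing these yields the desired surjection $H^*_G(M;\Q)\to H^*_G(\Phi^{-1}(0);\Q)$.

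For each $i$, Kirwan's analytic package \cite[\S10]{kirwan:book}, applied equivariantly, shows that $M_i$ is obtained from $M_{i-1}$ up to $G$--homotopy by attaching, for each critical set $C$ with $f(C)=c_i$, the disc bundle of $\nu_C^-$ along its sphere bundle. Hence excision together with the Thom isomorphism — valid over $\Q$ precisely because the $\nu_C^-$ are $\Q$--orientable (Remark~\ref{rk:coorientable}) — identify $H^*_G(M_i,M_{i-1};\Q)\cong\bigoplus_{C\,:\,f(C)=c_i}H^{*-\lambda_C}_G(C;\Q)$, where $\lambda_C$ denotes the rank of $\nu_C^-$. In the long exact sequence of the pair $(M_i,M_{i-1})$, surjectivity of $H^*_G(M_i)\to H^*_G(M_{i-1})$ is equivalent to injectivity of $H^*_G(M_i,M_{i-1})\to H^*_G(M_i)$; composing this last map with restriction to $C$ is cup product with the equivariant Euler class $e_G(\nu_C^-)\in H^{\lambda_C}_G(C;\Q)$. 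So the whole argument reduces to showing that $e_G(\nu_C^-)$ is not a zero divisor in $H^*_G(C;\Q)$ for every critical set $C$.

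This last point — the \emph{equivariant perfection} of $\|\Phi\|^2$ — is the main obstacle, and everything else is bookkeeping with long exact sequences. Here I would invoke the description of the critical set from Lemma~\ref{criterion}: if $\beta=\Phi(C)$ and $T_\beta$ is the associated torus, then $C$ is a union of connected components of $M^{T_\beta}$, and — as is visible in the local models used to prove Theorem~\ref{norm square}, where the negative directions are exactly the weight spaces $V_\mu$ with $\langle\mu,\beta\rangle<0$ — $T_\beta$ acts on the fibers of $\nu_C^-$ with only nonzero weights (indeed all of the same sign). By the Atiyah--Bott argument \cite{AB} (splitting principle: the top coefficient in the polynomial variables of $e_{T_\beta}(\nu_C^-)$ is a nonzero scalar, using that $T_\beta$ acts trivially on $C$), such an Euler class is not a zero divisor in $H^*_G(C;\Q)$. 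The detailed verification of this non‑zero‑divisor property, and of the excision/Thom identification above, is carried out in Kirwan \cite[\S5--6, \S10]{kirwan:book}; what the present paper contributes is precisely the minimal degeneracy of $\|\Phi\|^2$ with co‑orientable minimizing submanifolds (Theorem~\ref{main} plus the local normal form), which is what makes the stratification and the $\Q$--coefficient Thom isomorphism available in the first place.
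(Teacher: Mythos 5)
Your overall architecture --- minimal degeneracy of $\|\Phi\|^2$ with co-orientable minimizing manifolds (Theorem~\ref{norm square} together with Theorem~\ref{main}), Kirwan's analytic package from \cite[\S 10]{kirwan:book}, and the Atiyah--Bott non-zero-divisor criterion --- is the same as the paper's, and your verification of the Atiyah--Bott hypothesis via the weights of $T_\beta$ on the negative directions in the local model is correct in substance. But there is a genuine gap, and it is the very one the paper flags after the theorem. You run the excision/Thom step and the Euler-class argument on the critical sets $C$ and ``their'' negative normal bundles $\nu_C^-$, describing $M_i$ as obtained from $M_{i-1}$ by attaching the disc bundle of $\nu_C^-$ over $C$, and you place $e_G(\nu_C^-)$ in $H^{\lambda_C}_G(C;\Q)$. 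The critical sets of $\|\Phi\|^2$ need not be manifolds (see the example $\{|z|^2=|w|^2\}\subset\C^2$ in Section~\ref{sec:norm-square}), so there is no disc bundle over $C$ to attach and no Thom isomorphism for a ``bundle over $C$'' in the naive sense. Kirwan's argument applies excision and the Thom isomorphism to the normal bundles of the \emph{strata} $S_C$, which are genuine submanifolds, and only then transfers to $C$ via the \v Cech-cohomology isomorphism $H^*_G(S_C)\cong H^*_G(C)$ of \cite[Lemma~10.17]{kirwan:book}; the Atiyah--Bott Lemma is likewise applied to the normal bundle of the stratum, not to a bundle over $C$. Your weight computation survives this correction, but the objects in your long-exact-sequence bookkeeping must be replaced.

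A second, smaller point: your claim that the downward gradient flow deformation retracts $M_0=f^{-1}[0,\eps]$ equivariantly onto $\Phi^{-1}(0)$ is exactly the delicate issue when $0$ is a singular value. Kirwan's \S 10 yields only the \v Cech-cohomology isomorphism induced by $C\subset S_C$, not a deformation retraction; producing an actual (continuous, generally non-smooth) equivariant retraction along the gradient flow is the content of \cite{lerman:2005}, which is why the paper routes this step through the retraction of $\Phi^{-1}(U)$ onto $\Phi^{-1}(0)$ and cites Lerman. For the surjectivity statement the cohomology isomorphism suffices, so this is repairable, but as written the assertion is unjustified. With these two corrections your proof coincides with the paper's, which itself defers the remaining Morse-theoretic bookkeeping to \cite[Lemma~2.18]{kirwan:book}.
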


For a sufficiently small ball $U$ about the origin in $\g^*$,
there exists an equivariant deformation retraction from the preimage
$\Phi^{-1}(U)$ to the level set $\Phi^{-1}(0)$.  If $0$ is a
regular value of $\Phi$, this follows from the tubular neighbourhood
theorem; in general, it follows from the results of \cite{lerman:2005}.
Theorem~\ref{thm:surj} then follows 
from the proof of  \cite[Lemma~2.18]{kirwan:book}. 
We note that 
a key technical tool in the proof  of  \cite[Lemma~2.18]{kirwan:book}
is the Atiyah--Bott Lemma \cite[Proposition 13.4]{AB}, which
provides a condition that guarantees 
an equivariant Euler class to be a non-zero divisor.
The Atiyah--Bott Lemma may be applied 
to the (normal bundles of the) strata $S_C$, 
but not necessarily to the critical sets themselves.

\begin{Remark}
Lerman's paper~\cite{lerman:2005} gives a retraction 
from $\Phi^{-1}(U)$ to $\Phi^{-1}(0)$ which is
an equivariant homotopy inverse to the inclusion map
of $\Phi^{-1}(0)$ in $\Phi^{-1}(U)$.
Usually this retraction is only continuous and not smooth.
We believe that the inclusion map 
of $\Phi^{-1}(0)$ in $\Phi^{-1}(U)$
does have a smooth equivariant homotopy inverse
(whose restriction to $\Phi^{-1}(0)$ is homotopic
to the identity but not equal to the identity).
Details will appear elsewhere.
\end{Remark}

\end{document}